\numberwithin{equation}{section}
\newcommand{\bbE}{{\ensuremath{\mathbbm E}} }
\newcommand{\bbN}{{\ensuremath{\mathbbm N}} }
\newcommand{\bbP}{{\ensuremath{\mathbbm P}} }
\newcommand{\bbR}{{\ensuremath{\mathbbm R}} }
\newcommand{\bbZ}{{\ensuremath{\mathbbm Z}} }
\newcommand{\cA}{{\ensuremath{\mathcal A}} }
\newcommand{\cD}{{\ensuremath{\mathcal D}} }
\newcommand{\cG}{{\ensuremath{\mathcal G}} }
\newcommand{\cM}{{\ensuremath{\mathcal M}} }
\newcommand{\cN}{{\ensuremath{\mathcal N}} }
\newcommand{\cP}{{\ensuremath{\mathcal P}} }
\newcommand{\cR}{{\ensuremath{\mathcal R}} }
\newcommand{\cT}{{\ensuremath{\mathcal T}} }
\newcommand{\cW}{{\ensuremath{\mathcal W}} }
\newcommand{\cX}{{\ensuremath{\mathcal X}} }
\newcommand{\bE}{{\ensuremath{\mathbf E}} }
\newcommand{\bM}{{\ensuremath{\mathbf M}} }
\newcommand{\bP}{{\ensuremath{\mathbf P}} }
\newcommand{\bY}{{\ensuremath{\mathbf Y}} }
\newcommand{\bZ}{{\ensuremath{\mathbf Z}} }
\newcommand{\ga}{\alpha}
\newcommand{\gb}{\beta}
\renewcommand{\epsilon}{\varepsilon}
\renewcommand{\theta}{\vartheta}
\renewcommand{\tilde}{\widetilde}
\newcommand{\ind}{\mathbbm{1}}
\newcommand{\dd}{{\ensuremath{\mathrm d}} }
\newcommand{\ent}{\mathrm{Ent}}
\newcommand{\hatent}{\hat{\mathrm{E}}\mathrm{nt}}
\newtheorem{theorem}{Theorem}[section]
\newtheorem{definition}{Definition}[section]
\newtheorem{lemma}[theorem]{Lemma}
\newtheorem{proposition}[theorem]{Proposition}
\newtheorem{remark}{Remark}[section]
\title{Non-directed polymers in random environments with range penalties: the high-dimensional case}
\author{
Niccol\`o~Torri
\footnote{MODAL'X, Universit\'{e} Paris Nanterre, 92000 Nanterre, France. Email: ntorri@parisnanterre.fr,
Orcid number : 0000-0002-4778-1305}
\and
Ran~Wei
\footnote{Department of Financial and Actuarial Mathematics, Xi'an Jiaotong-Liverpool University, 111 Ren'ai Road, Suzhou, Jiangsu Province, 215123, China.
Email: ran.wei@xjtlu.edu.cn}
}
\date{}
\begin{document}

\maketitle


\begin{abstract}
We study a \textit{non-directed polymer model} in random environments. The polymer is modeled by a simple symmetric random walk $S$ on $\bbZ^d$ with $d\geq2$, and the random environment is modeled by i.i.d.\ random variables whose tail probability decays polynomially. The interaction between the polymer and the random environment is captured by a Gibbs transform: at time $N$, the law of $S$ is tilted by the factor $\exp(\sum_{x\in\cR_N}(\beta\omega_x-h))$, where $\cR_N$ is the range of $S$ up to time $N$, $\beta\geq0$ is the inverse temperature, and $h\in\bbR$ is an external field. By appropriately tuning $\beta=\beta_N$ and $h=h_N$, we establish the phase diagram, analyze the fluctuations of $S$ under the Gibbs transform, and derive the scaling limits of the (logarithmic) partition function. This paper is a follow-up work of \cite{BTW21}. The main novelty and challenge arise from tuning the external field $h$, which brings in various \textit{range penalties}, unlike in \cite{BTW21}, where $h$ is fixed and serves only as a centering term for the random environment.

\vspace{0.2cm}
\textit{2010 Mathematics Subject Classification}:
82D60, 60K37, 60G70

\vspace{0.2cm}
\textit{Keywords}:
Random polymer, Random walk, Range, Heavy-tail distributions, Weak-coupling limit, Folding, Stretching
\end{abstract}



\section{Introduction}\label{sec:intro}
In this paper, we study a \textit{non-directed polymer model}, which describes an interaction between the polymer chain and the random environment (also referred to as \textit{disorder}). Specifically, each time the polymer chain visits a new site, it receives energy from the disorder and penalty from the external field. These competing effects jointly influence its configuration. We model the polymer by a simple symmetric random walk on $\bbZ^d$ and model the random environment by a family of i.i.d.\ random variables. A precise definition of the model will be provided in Section \ref{subsec:def_mode}.

The non-directed polymer model was originally introduced in \cite{H19}. It is closely related to two classical models: the \textit{random walk penalized by its range} (see \cite{DV1979}) and the \textit{directed polymer model} (see \cite{C17,Z24}). In the former, the law of the random walk is tilted solely by a deterministic external field, without the presence of a random environment. Our model may therefore be viewed as a disordered perturbation of that framework. In the directed polymer model, by contrast, the walk interacts with the environment at every time step---though, due to the directedness constraint, each time-space site is visited at most once. In our setting, however, the walk only interacts with the environment the first time it visits a new site (in the spirit of the excited random walk studied in \cite{BW03} or more generally the cookie random walk studied in \cite{Zer05}).

In the first study of the non-directed polymer model \cite{H19}, the weak coupling limits for the partition function were established under the assumption that the random environment admits finite exponential moments. More recently, the one-dimensional case has been analyzed in detail in \cite{BHTW20, B24}. For dimensions $d\geq2$, under heavy-tailed random environments and a \textit{fixed} external field $h$, the model was investigated in \cite{BTW21}, where the scaling limits for the (logarithmic) partition function were established. In this work, we extend the results of \cite{BTW21} by allowing the external field to vary with time, i.e., $h = h_N$, while still considering dimensions $d\geq2$ and heavy-tailed random environments.

\subsection{Definition of the model}\label{subsec:def_mode}
Let $S:=(S_n)_{n\geq0}$ be a simple symmetric random walk on $\bbZ^d$ starting at the origin, where $d\geq2$. The probability and the expectation of $S$ are denoted by $\bP$ and $\bE$ respectively. The \textit{range} of $S$ up to time $N$ is denoted by $\cR_N=\cR_N(S):=\{S_0, S_1, \cdots, S_N\}$, which is the set of distinct sites visited by the random walk up to time $N$.

Let $\omega:=(\omega_x)_{x\in\bbZ^d}$ be a family of i.i.d.\ random variables independent of $S$. The probability and the expectation of $\omega$ are denoted by $\bbP$ and $\bbE$ respectively. Throughout this paper, we assume that $\omega$ has a heavy-tailed distribution. Specifically, we assume that 
\begin{equation}\label{def:omega}
\bbP(\omega>t)\sim pt^{-\alpha}\quad\text{and}\quad\bbP(\omega<-t)\sim q t^{-\alpha},\quad \text{as\, } t\to +\infty,
\end{equation}
where $p,q>0$, and $\alpha\in(0,d)$ (for a discussion about the range of $\alpha$, see Section \ref{sec:heuristics}). Additionally, if $\ga>1$, we assume that $\bbE[\omega]=0$.

The \textit{polymer measure} at time $N$ is defined via a Gibbs transform
\begin{equation}\label{def:model}
\frac{\dd\bP_{N,\beta}^{\omega,h}}{\dd\bP}(S):=\frac{1}{\bZ_{N,\beta}^{\omega,h}}\exp\left(\sum\limits_{x\in\cR_N}(\beta\omega_x-h)\right)=\frac{1}{\bZ_{N,\beta}^{\omega,h}}\exp\left(\sum\limits_{x\in\cR_N}\beta\omega_x-h|\cR_N|\right),
\end{equation}
where $\beta\geq0$ is the inverse temperature, $h\geq0$ is an external field and
\begin{equation}\label{df:pf}
\bZ_{N,\beta}^{\omega,h}=\bE\left[\exp\left(\sum\limits_{x\in\cR_N}\beta\omega_x-h|\cR_N|\right)\right]
\end{equation}
is the \textit{partition function}, which ensures that $\bP_{N,\beta}^{\omega,h}$ is a probability measure.

Let us comment on the assumptions in \eqref{def:omega} and the condition $h\geq0$:

$\bullet$ The requirement $q>0$ is not essential, and our results are expected to remain valid even when $q=0$. Indeed, in random polymer models, polymer chains are intended to gain energy from the disorder. Hence, the right tail of $\omega$ plays a primary role, encouraging the polymer chain to explore. Assuming $q>0$ provides better control of the left tail of $\omega$. In particular, it simplifies the proof of Theorem \ref{thm:R5a} (see the penultimate paragraph in Section \ref{sec:R5}) and allows us to obtain the sharp asymptotics in \eqref{eq:lambda}. Note that if $q=0$, then $\omega\geq0$, and the exponent in \eqref{def:model} should be replaced by $\sum_{x\in\cR_N}(\beta(\omega_x-\bbE[\omega_x]\ind_{\{\alpha>1\}})-h)$ as in \cite{BTW21}. In that work, $\omega\geq0$ and the exponent was defined by $\sum_{x\in\cR_N}\beta(\omega_x-h)$ (note that $h$ is in the bracket, different from \eqref{def:model}), where $h$ was either negligible or chosen as $h=\bbE[\omega_x]\ind_{\{\alpha>1\}}$.

$\bullet$ As seen from \eqref{def:model}, environments with large positive energies are attractive to the random walk. At the same time, the external field $h\geq0$ penalizes trajectories with a long range, thereby encouraging the walk to fold. This leads to a competition between the attractive effect of random environments (we call it \textit{energy gain}) and the penalizing effect of the external field (we call it \textit{range penalty}). If $h<0$, then both effects encourage the random walk to explore, making the walk more likely be super-diffusive and resulting in a less rich phase diagram. We refer to \cite{BHTW20} for the case $h<0$ in dimension $d=1$.

\vspace{0.2cm}
We further allow both $\beta$ and $h$ to vary with $N$. Specifically, we set
\begin{equation}\label{def:betah}
\beta_N=\hat{\beta}N^{-\gamma},\quad h_N=\hat{h}N^{-\zeta}
\end{equation}
for constants $\hat{\beta},\hat{h}>0$ and $\gamma, \zeta\in\bbR$. To simplify notation, we write $\bP_N=\bP_{N,\beta_N}^{\omega, h_N}$, $\bE_N=\bE_{N,\beta_N}^{\omega,h_N}$ and $\bZ_N=\bZ_{N,\beta_N}^{\omega, h_N}$ when there is no ambiguity. Note that when $\gamma,\zeta>0$, this corresponds to the \textit{weak-coupling limit} (or the \textit{intermediate disorder region}) of the model, first studied in \cite{AKQ14} for the directed polymer model and later extended to various disordered systems in \cite{CSZ13}. Following \cite{BHTW20}, we also include the case $\gamma,\zeta<0$, in which the coupling strength increases with $N$.

In this paper, we investigate the scaling limits of $\log\bZ_N$ or $\bZ_N$ (under appropriate centering), as well as the \textit{end-to-end} exponent $\xi$ of $S$ under $\bP_N$. Informally, the end-to-end exponent is characterized by $$\bbE\bE_N^\omega[\max_{1\leq n\leq N}|S_n|]\approx N^\xi,$$ and a precise definition will be given in Definition \ref{def:xi}. These scaling limits and the exponent $\xi$ naturally depend on $d,\alpha,\gamma$ and $\zeta$. In Section \ref{sec:heuristics}, we provide a heuristic analysis to elucidate the interplay among these parameters and to establish the phase diagrams of the model. This heuristic analysis is pivotal to our paper.

\subsection{Main contributions} Let us highlight our contributions in this paper.

$\bullet$ First, we establish the complete phase diagrams for the non-directed polymer model for $d\geq2$ and $\alpha\in(0,\frac{d}{2})\cup(\frac{d}{2},d)$, extending those obtained in \cite{BTW21}. The behavior of the model differs significantly between the cases $\alpha \in (0, \frac{d}{2})$ and $\alpha \in (\frac{d}{2}, d)$. For $\alpha \in (\frac{d}{2}, d)$, we identify six regions in the phase diagram, labeled by $R_1$--$R_6$, whereas for $\alpha \in (0, \frac{d}{2})$, there are five regions, with the region $R_2$ absent, see Section \ref{sec:phasediagram}.

We characterize all phases for $\alpha\in(0,\frac{d}{2})$ and most phases for $\alpha\in(\frac{d}{2},d)$ by determining the scaling limits of $\log\bZ_N$ or $\bZ_N$. In particular, the region $R_1$ corresponds to the diffusive regime, and the regions $R_2$ and $R_3$ correspond to super-diffusive regimes (as studied in \cite{BTW21}), while the regions $R_4$, $R_5$, and $R_6$ are sub-diffusive regimes that do not appear in \cite{BTW21} (see Section~\ref{sec:main_result} for details).

$\bullet$ Second, we identify a new phase transition between the regions $R_4$ and $R_5$ when the range penalty and the entropy cost for sub-diffusive paths are balanced. This phenomenon does not occur in the one-dimensional disordered model studied in \cite{BHTW20,B24}, nor in high-dimensional homogeneous models with a fixed external field, as considered in \cite{DV1979,Bolt94,DFSX20,BC21}.

While the scaling limit of $\log\bZ_N$ in the region $R_5$ is consistent with earlier results in \cite{BHTW20,S90,DV1979}, the corresponding result in the region $R_4$ is new. Moreover, in the region $R_4$, we establish the sharp asymptotics $|\cR_N|\approx\bE[|\cR_N|]=O(N)$ under $\bP_N$ (see Theorem \ref{thm:R4}), revealing an interesting phenomenon: a suitably weak range penalty induces the random walk to fold, yet does not promote frequent self-intersections. In this regime, the entropy cost of self-intersections is too large to be compensated by a range penalty of insufficient strength.

$\bullet$ Third, we identify further phase transitions in the region $R_1$, which were not observable in \cite{BTW21} due to the lack of a varying external field therein. In particular, the centered and rescaled partition function may converge to a deterministic limit (see \eqref{eq:new_phase} and \eqref{eq:new}), in which the effect of the random environment does not appear in the first order fluctuation of $\log\bZ_N$, as is also the case in the regions $R_4$, $R_5$ and $R_6$. This naturally motivates the study of higher-order asymptotics and fluctuations of the partition function, as carried out in \cite{B24}.

$\bullet$ Finally, we develop a strategy to control the entropy cost associated with the range of $S$ when the range penalty is sufficiently strong (in particular, when $\zeta<\gamma-\frac{d-\alpha}{\alpha}$, see Figures \ref{diagram2} and \ref{diagram3}). This is achieved by comparing the entropy cost of a specific class of random walk configurations with a quantity derived from Hammersley's last-passage percolation (see Lemma \ref{lem:g>zda}, Theorem \ref{thm:transf-rangentr} and Proposition \ref{lem:Omega}; see also \cite{BT21}). These technical estimates are novel and may be of independent interest. 

\subsection{Organization of the paper}
The remainder of the paper is organized as follows. In Section~\ref{sec:main_result}, we state our main results, present the heuristics, and provide some discussion, conjectures, and relevant notation. In Section \ref{sec:R4}, we prove Theorem \ref{thm:R4}. In Section~\ref{sec:R6}, we prove Theorem~\ref{thm:R6} and establish a key technical result, Lemma~\ref{lem:g>zda}, which controls the energy gain associated with the range. In Section~\ref{sec:R5}, we prove Theorem~\ref{thm:R5a}, again relying on Lemma~\ref{lem:g>zda}. In Section \ref{ch:discuss}, we discuss the end-to-end exponent $\xi$ and the remaining part in the region $R_5$ for $\alpha\in(\frac{d}{2},d)$. Some technical estimates are collected in Appendix \ref{sec:appA}. Finally, we sketch the proofs for Theorems \ref{thm:R1a}-\ref{thm:A} in Appendix \ref{S-R123}, which are straightforward adaptations of the arguments in \cite{BTW21}.

\section{Main results}\label{sec:main_result}
In this section, we state our main results concerning the scaling limits of $\log \bZ_N$ or $\bZ_N$ (recall the definition of $\bZ_N$ below \eqref{def:betah}) and the end-to-end exponent $\xi$ in each regime. We then present the heuristics underlying the phase diagrams, which play a central role in the paper. We also provide further discussion and conjectures related to our results. Finally, we collect notation that will be used throughout the paper.

\subsection{Phase diagram}\label{sec:phasediagram}
In this paper, we restrict our attention to the case $\alpha<d$ (see Section \ref{sec:heuristics} for the reason). Below, we introduce the phase diagrams and describe the regions as functions of the parameters $\gamma, \zeta, \alpha$ and $d$. As will become clear in Section~\ref{sec:heuristics}, where the heuristics is presented, we need to consider the phase diagrams separately for the cases $\alpha \in \big(0, \frac{d}{2}\big)$ and $\alpha \in \big(\frac{d}{2}, d\big)$.

$\bullet$ For $\alpha\in(\frac d2,d)$, the phase diagram is composed by 6 regions:
\begin{enumerate} \itemsep -1pt
\item $R_1=\Big\{(\zeta, \gamma) \colon \zeta >1, \gamma >\frac{d}{2\alpha} \Big\}$, 
\item $R_2=\Big\{(\zeta, \gamma) \colon \zeta >0,  \frac{d-\alpha}{\alpha}<\gamma < \Big(\frac{2\alpha-d}{2\alpha}\zeta +\frac{d-\alpha}{\alpha}\Big)\wedge\frac{d}{2\alpha} \Big\}$,
\item $R_3=\Big\{(\zeta, \gamma) \colon \gamma < \zeta\wedge 0 +\frac{d-\alpha}{\alpha} \Big\}$,
\item $R_4=\Big\{(\zeta, \gamma) \colon \zeta\in (\frac2d,1), \gamma >\zeta\Big(1\wedge \frac{2\alpha-d}{2\alpha}\Big)+\frac{d-\alpha}{\alpha}\Big\}$,
\item $R_5=\Big\{(\zeta, \gamma) \colon \zeta\in (-1,\frac2d), \gamma >\zeta\Big(1\wedge \frac{2\alpha-d}{2\alpha}\Big)+\frac{d-\alpha}{\alpha}\Big\}$,
\item $R_6=\Big\{(\zeta, \gamma) \colon \zeta<-1, \gamma >\zeta+\frac{d-\alpha}{\alpha}\Big\}$.
\end{enumerate}
The regions $R_2$ and $R_3$ correspond to super-diffusive regimes, while $R_4$, $R_5$ and $R_6$ correspond to sub-diffusive regimes, and $R_1$ corresponds to the diffusive regime. Note that the region $R_4$ exists only for $d\ge3$. The corresponding phase diagram is shown in Figure \ref{diagram2}.
\begin{figure}[h!] 
\centering
\begin{tikzpicture}[scale=3.25]
\draw[->](-1.75, 0)--(1.5, 0) node[anchor=north] {$\zeta$};
\draw[->](0, -1.25)--(0, 1) node[anchor=east] {$\gamma$};
\draw[red, thick](0, 0.5)--(1, 0.75);
\draw[green, thick](1, 0.75)--(1.5, 0.75);
\draw[yellow, thick](1, 0.75)--(1, 1);
\draw (1, 0.85) node[anchor=west] {\small $(1,\frac{d}{2\ga})$};
\draw (1.45, 1) node[anchor=north] {$R_1$};
\draw[blue, thick](0, 0.5)--(1.5, 0.5);
\draw[teal, dashed](0.66, 0.75)--(0.25, 0.75);
\draw[teal, dashed](0.25, 0.75)--(0, 0.5);
\draw (-0.4, 0.7) node[anchor=west] {\small $\gamma=\frac{2\ga-d}{2\ga}\zeta+\frac{d-\ga}{\ga}$};
\draw (1, 0.675) node[anchor=north] {$R_2$};
\draw (0, 0.4) node[anchor=west] {\small $(0, \frac{d-\ga}{\ga})$};
\draw[brown, thick](0, 0.5)--(-1.75, -1.25);
\draw (-0.5, -0.275) node[anchor=north] {\small $\gamma=\zeta+\frac{d-\ga}{\ga}$};
\draw (0.7, -0.5) node[anchor=north] {$R_3$};
\draw[violet, thick](-1, -0.5)--(-1, 1); 
\draw[orange, thick](0.66,0.66)--(0.66,1);
\draw (0.66, 1.2) node[anchor=north] {$\zeta=\frac2d$};
\draw (0.8, 1) node[anchor=north] {$R_4$};
\draw (-1.3, -0.35) node[anchor=north] {\small $(-1, \frac{d}{\ga}-2)$};
\draw (-0.5, 0.7) node[anchor=north] {$R_5$};
\draw (-1.15, 0.2) node[anchor=north] {$R_6$};
\draw (-2.2, 1.2) node[anchor=west] {\small $R_1: \xi=\frac{1}{2}$};
\draw (-2.2, 1) node[anchor=west] {\small $R_2: \xi=\frac{\ga(1-\gamma)}{2\ga-d}$};
\draw (-2.2, 0.8) node[anchor=west] {\small $R_3: \xi=1$};
\draw (-2.2, 0.6) node[anchor=west] {\small $R_4: \xi=\frac{\zeta}{2}\in(\frac1d,\frac12)$};
\draw (-2.2, 0.4) node[anchor=west] {\small $R_5: \xi=\frac{1+\zeta}{d+2}\in(0,\frac1d)$};
\draw (-2.2, 0.2) node[anchor=west] {\small $R_6: \xi=0$};
\end{tikzpicture}
\caption{The phase diagram for $\alpha\in(\frac{d}{2},d)$ (illustrated for the representative case $d=3$, $\alpha=2$). The small region below the dashed line within $R_5$ remains unresolved.}
\label{diagram2}
\end{figure}

$\bullet$ For $\alpha\in(0,\frac d2)$, we have the following 5 regions (the region $R_2$ disappears):
\begin{enumerate}
\item $R_1=\Big\{(\zeta, \gamma) \colon \zeta >1, \gamma >\frac{d-\alpha}{\alpha} \Big\}$, 
\item $R_3=\Big\{(\zeta, \gamma) \colon \gamma < \zeta\wedge 0 +\frac{d-\alpha}{\alpha} \Big\}$,
\item $R_4=\Big\{(\zeta, \gamma) \colon \zeta\in (\frac2d,1), \gamma >\frac{d-\alpha}{\alpha}\Big\}$,
\item $R_5=\Big\{(\zeta, \gamma) \colon \zeta\in (-1,\frac2d), \gamma >\zeta\wedge 0+\frac{d-\alpha}{\alpha}\Big\}$,
\item $R_6=\Big\{(\zeta, \gamma) \colon \zeta<-1, \gamma >\zeta+\frac{d-\alpha}{\alpha}\Big\}$.
\end{enumerate}
The corresponding phase diagram is shown in Figure \ref{diagram3}.
\begin{figure}[h!] 
\centering
\begin{tikzpicture}[scale=3]
\draw[->](-1.75, 0)--(1.5, 0) node[anchor=north] {$\zeta$};
\draw[->](0, -0.3)--(0, 2) node[anchor=east] {$\gamma$};
\draw[yellow, thick](1, 1.2)--(1, 2);
\draw (1, 1.1) node[anchor=west] {\small $(1,\frac{d-\ga}{\ga})$};
\draw (1.3, 1.7) node[anchor=north] {$R_1$};
\draw[blue, thick](0, 1.2)--(1.5, 1.2);
\draw (0, 1.1) node[anchor=west] {\small $(0, \frac{d-\ga}{\ga})$};
\draw[brown, thick](0, 1.2)--(-1.5, -0.3);
\draw (-0.5, 0.45) node[anchor=north] {\small $\gamma=\zeta+\frac{d-\ga}{\ga}$};
\draw (0.25, 0.5) node[anchor=north] {$R_3$};
\draw[violet, thick](-1, 0.2)--(-1, 2); 
\draw[orange, thick](0.66,1.2)--(0.66,2);
\draw (0.5, 1.1) node[anchor=west] {\small $(\frac2d, \frac{d-\ga}{\ga})$};
\draw (0.8, 1.7) node[anchor=north] {$R_4$};
\draw (-1.3, 0.4) node[anchor=north] {\small $(-1, \frac{d}{\ga}-2)$};
\draw (-0.25, 1.7) node[anchor=north] {$R_5$};
\draw (-1.35, 1) node[anchor=north] {$R_6$};
\draw (-2.2, 2) node[anchor=west] {\small $R_1: \xi=\frac{1}{2}$};
\draw (-2.2, 1.8) node[anchor=west] {\small $R_3: \xi=1$};
\draw (-2.2, 1.6) node[anchor=west] {\small $R_4: \xi=\frac{\zeta}{2}\in(\frac1d,\frac12)$};
\draw (-2.2, 1.4) node[anchor=west] {\small $R_5: \xi=\frac{1+\zeta}{d+2}\in(0,\frac1d)$};
\draw (-2.2, 1.2) node[anchor=west] {\small $R_6: \xi=0$};
\end{tikzpicture}
\caption{The phase diagram for $\alpha\in(0,\frac{d}{2})$ (illustrated for the representative case $d=3, \ga=\frac54$).}
\label{diagram3}
\end{figure}

\vspace{0.2cm}
Before stating our main results, we first provide a rigorous definition of the end-to-end exponent $\xi$.
\begin{definition}\label{def:xi}
We say that $(S_n)_{0\leq n\leq N}$ has end-to-end fluctuations $r_N$ under $\bP_N$, if for any $\epsilon>0$, there exists some $\eta\in(0,1)$, such that for $N$ large enough,
\begin{equation*}
\bbP\Big(\bP_N\Big(\max_{0\le n\le N}|S_n|\in[\eta,\eta^{-1}]r_N\Big)>1-\epsilon\Big)>1-\epsilon.
\end{equation*}
If $r_N$ is of the form $N^\xi$, then $\xi$ is the end-to-end exponent.
\end{definition}
We then study the exponent $\xi$ and the non-trivial scaling limits of $\log\bZ_N$ or $\bZ_N$  according to the phase diagrams in Figures \ref{diagram2} and \ref{diagram3}. To avoid overly technical complications, we exclude all boundary cases and restrict our analysis to the interior of each region.

Our results for the regions $R_4$, $R_5$, and $R_6$ are new. In these regions, the energy gain is negligible, and the random walk is sub-diffusive. Inspired by the ideas in \cite{BT19a, BHTW20, BTW21}, we perform a delicate comparison between the energy gain and the range penalty, and show that $\bZ_N$ is comparable to $\bZ_{N,\beta_N=0}^{\omega,h_N}$, where the latter is the partition function of the corresponding homogeneous model. It is known from \cite{DV1979} that
\begin{equation*}
\log\bZ_{N,\beta_N=0}^{\omega,h_N}\approx\max_{(S_n)_{0\leq n\leq N}}\{\text{range penalty}-\text{entropy cost}\}.
\end{equation*}
Hence, the appropriate scaling factor for $\log \bZ_N$ is $N^{-(1-2\xi)}$ for $\xi\in(0, \frac{1}{2})$ (see the discussion of the sub-diffusive entropy cost in Section~\ref{sec:heuristics}), and the scaling limit is given by a limiting variational problem, which admits an explicit solution in \cite{DV1979} (at least for $\xi\in\big(0,\frac{1}{d}\big)$). We will state the results for the regions $R_4$, $R_5$, and $R_6$ in Section~\ref{sec:results}.

The regions $R_1$, $R_2$, and $R_3$ were studied in depth in \cite{BTW21}, where the external field $h$ is fixed and serves as a centering term for $\omega$. In our setting, a straightforward comparison shows that the external field $h_N$ is negligible in the regions $R_2$ and $R_3$, and thus the results from \cite{BTW21} apply directly. In contrast, the region $R_1$ exhibits a richer phase diagram. In particular, we identify an additional phase transition in $R_1$, depending on the influence of the energy gain and the range penalty on higher-order terms of $\bZ_N$. The proof proceeds by adapting the corresponding arguments in \cite{BTW21}. For completeness, we restate the results for the regions $R_1$, $R_2$, and $R_3$ in Section~\ref{sec:results+}, along with a sketch of the proofs in Appendix~\ref{S-R123}.

\subsection{Main results in the regions $R_4, R_5$ and $R_6$}\label{sec:results}
We now state our results for the regions $R_4$, $R_5$ and $R_6$.

In the region $R_4$, the random walk tends to fold, while still having enough space to extend. This region exists only when the dimension satisfies $d \geq 3$. It is well known that for $d \geq 3$ (see \cite{DE51}),
\begin{equation}\label{eq:range_limit}
\gamma_d:=\lim_{N\to\infty}\frac{1}{N}\bE[|\cR_N|]>0.
\end{equation}
We then have the following result.
\begin{theorem}\label{thm:R4}
Let $\alpha\in(0,d)$ and $(\zeta,\gamma)\in R_4$. In particular, this requires $d\geq3$. Then for $\xi=\frac{\zeta}{2}\in(\frac1d,\frac12)$, we have that
\begin{equation*}
\lim\limits_{N\to\infty}\frac{1}{N^{1-2\xi}}\log\bZ_N=-\hat{h}\gamma_d\quad\text{in}~\bbP\text{-probability},
\end{equation*}
where $\gamma_d>0$ is given in \eqref{eq:range_limit}, and for any $\delta>0$, $\bP_N\big(\big|\frac{|\cR_N|}{N}-\gamma_d\big|<\delta\big)\overset{N\to\infty}{\longrightarrow}1$ in $\bbP$-probability.
\end{theorem}

In the region $R_5$, the random walk tends to fold and is expected to fill a sub-diffusive ball (see \cite{Bolt94, DFSX20, BC21}). We state our results by considering three cases separately, as the analysis differs across these cases.
\begin{theorem}\label{thm:R5a}
If any one of the following conditions holds:
\begin{itemize}
\item[(1)] $\alpha\in(0,d)$ and $(\zeta,\gamma)\in R_5$ with $\gamma>\zeta+\frac{d-\alpha}{\alpha}$,
\item[(2)] $\alpha\in(0,\frac{d}{2})$ and $(\zeta,\gamma)\in R_5$ with $\gamma>\frac{d-\alpha}{\alpha}$,
\item[(3)] $\alpha\in[\frac{d}{2},d)$ and $(\zeta,\gamma)\in R_5$ with $\gamma\geq\frac{d}{2\alpha}$,
\end{itemize}
then for $\xi=\frac{1+\zeta}{2+d}$, we have the following convergence in $\bbP$-probability
\begin{equation}\label{eq:R5scaling}
\lim\limits_{N\to\infty}\frac{1}{N^{1-2\xi}}\log\bZ_N=\lim\limits_{N\to\infty}\frac{1}{N^{1-2\xi}}\log\bZ_{N,\beta_N=0}^{\omega,h_N}=-c_d(\hat{h}):=-\hat{h}^{\frac2{d+2}}\Big(\frac{d+2}{2}\Big)\Big(\frac{2\lambda_1}{d}\Big)^{\frac{d}{d+2}},
\end{equation}
where $\lambda_1$ is the principal Dirichlet eigenvalue of $-\frac{1}{2d}\Delta$ in the ball with unit volume in $\bbR^d$ centered at the origin, and the explicit expression for $c_d(\hat{h})$ was derived in \cite{DV1979}.
\end{theorem}

\begin{remark}
Taking the union of cases (1)-(3) in Theorem \ref{thm:R5a}, we observe that for $\alpha\in(0,\frac{d}{2})$, (see Figure \ref{diagram3}), the scaling limit of $\log Z_N$ has been identified throughout the entire region $R_5$. In contrast, for $\alpha\in(\frac{d}{2}, d)$, (see Figure \ref{diagram2}), there remains a small sub-region of $R_5$ where the problem is unresolved, where $0<\zeta<\frac{2}{d}$ and $\frac{2\alpha-d}{2\alpha}\zeta+\frac{d-\alpha}{\alpha}<\gamma\leq(\zeta+\frac{d-\alpha}{\alpha})\wedge\frac{d}{2\alpha}$. The techniques in this small part could be very sophisticated and we will discuss some possible approaches at the end of this paper in Section \ref{ch:discuss}.
\end{remark}

In the region $R_6$, the range penalty is so strong that the random walk is confined to two points, and we have the following result.
\begin{theorem}\label{thm:R6}
Let $\alpha \in (0,d)$ and $(\zeta,\gamma)\in R_6$. Then
\begin{equation}
\lim_{N\to +\infty} N^{\zeta}\log \bZ_N=
\lim_{N\to +\infty} N^{\zeta}\log \bZ_N(|\cR_N|=2)=-2\hat h\quad\text{in}~\bbP\text{-probability}
\end{equation}
and $\bP_N(|\cR_N|=2)\overset{N\to\infty}{\longrightarrow}1$ in $\bbP$-probability.
\end{theorem}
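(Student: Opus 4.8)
The plan is to show that the maximally folded trajectories, those with $|\cR_N|=2$, dominate $\bZ_N$.

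\emph{Lower bound.} For each unit vector $e$ there is exactly one nearest-neighbour trajectory $(S_n)_{n\le N}$ with range $\{0,e\}$, namely $0,e,0,e,\dots$; it has $\bP$-probability $(2d)^{-N}$ and energy $\beta_N(\omega_0+\omega_e)$. Summing over the $2d$ neighbours,
\begin{equation*}
\bZ_N\ \ge\ \bZ_N(|\cR_N|=2)\ =\ (2d)^{-N}e^{-2h_N}\!\!\sum_{\|e\|_1=1}\!\!e^{\beta_N(\omega_0+\omega_e)},
\end{equation*}
so $\log\bZ_N(|\cR_N|=2)=-N\log(2d)-2h_N+\log(2d)+O\!\big(\beta_N\max_{\|e\|_1=1}|\omega_0+\omega_e|\big)$. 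Multiplying by $N^{\zeta}$ and using $\zeta<-1$ (so $N^{1+\zeta}\to0$) and $\gamma>\zeta$ (so $N^{\zeta}\beta_N=\hat\beta N^{\zeta-\gamma}\to0$, while $\max_e|\omega_0+\omega_e|$ is an a.s.\ finite $N$-independent random variable) gives $N^{\zeta}\log\bZ_N(|\cR_N|=2)\to-2\hat h$ almost surely; this is the stated limit for $\bZ_N(|\cR_N|=2)$ and already yields $\liminf_N N^\zeta\log\bZ_N\ge-2\hat h$.

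\emph{Upper bound and the two claims.} Write $\bZ_N=\bZ_N(|\cR_N|=2)+\sum_{k=3}^{N+1}T_k$ with $T_k:=\bE[e^{\beta_N\sum_{x\in\cR_N}\omega_x-h_Nk}\ind_{|\cR_N|=k}]$. Since $\cR_N$ is a lattice-connected set containing $0$, on $\{|\cR_N|=k\}$ one has $\sum_{x\in\cR_N}\omega_x\le\Theta_k$, where
\begin{equation*}
\Theta_k\ :=\ \max\Big\{\textstyle\sum_{x\in A}\omega_x:\ 0\in A,\ A\ \text{connected},\ |A|=k\Big\},
\end{equation*}
so $T_k\le e^{-h_Nk+\beta_N\Theta_k}$. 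The heart of the proof is a deterministic-on-a-good-event estimate: there are $C,c=C,c(d,\alpha)$ such that, on an event $E_N$ with $\bbP(E_N)\to1$, $\Theta_k\le C\,k^{d/\alpha}(\log N)^{c}$ for every $3\le k\le N+1$. Granting this, and since $\alpha<d$, the bound gives $\beta_N\Theta_k/k\le \hat\beta C\,N^{\,d/\alpha-1-\gamma}(\log N)^{c}$ uniformly in $k\le N+1$, and the exponent $d/\alpha-1-\gamma$ is $<-\zeta$ precisely by the defining inequality $\gamma>\zeta+\tfrac{d-\alpha}{\alpha}$ of $R_6$; hence for $N$ large $\beta_N\Theta_k\le\tfrac14 h_Nk$ and $T_k\le e^{-\frac34 h_Nk}$. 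Summing the geometric series, $\sum_{k\ge3}T_k\le 2\,e^{-\frac94 h_N}$, whereas $\bZ_N(|\cR_N|=2)\ge(2d)^{-N}e^{-2h_N}e^{\beta_N(\omega_0+\max_e\omega_e)}$; since $e^{-\frac14 h_N+N\log(2d)}\to0$ (this is where $\zeta<-1$, i.e.\ $h_N\gg N$, is used a second time) and $N^\zeta\beta_N\to0$, we get $\sum_{k\ge3}T_k=o\big(\bZ_N(|\cR_N|=2)\big)$ in $\bbP$-probability. Therefore $\bZ_N=\bZ_N(|\cR_N|=2)(1+o(1))$, which upgrades the lower bound to $N^\zeta\log\bZ_N\to-2\hat h$, and $\bP_N(|\cR_N|=2)=\bZ_N(|\cR_N|=2)/\bZ_N\to1$, both in $\bbP$-probability.

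\emph{The estimate on $\Theta_k$ — the main obstacle.} First pass to $G_N=\{\max_{\|x\|_1\le N+1}|\omega_x|\le N^{d/\alpha+\epsilon}\}$, which has probability $\to1$ by \eqref{def:omega}, so all relevant scales are $\lesssim N^{d/\alpha}$. For $\alpha<1$ the estimate is soft: such an $A$ lies in $\{\|x\|_1\le k\}$, so $\Theta_k\le\sum_{j=1}^{k}\omega_{(j)}$ where $\omega_{(1)}\ge\omega_{(2)}\ge\cdots$ are the order statistics of $\omega$ on that ball, and standard order-statistic tail bounds — made uniform by a union bound over only the $O(\log N)$ dyadic radii and $O(\log N)$ dyadic ranks — give $\omega_{(j)}\le C k^{d/\alpha}(\log N)^{c}j^{-1/\alpha}$, which sums because $\sum_{j\ge1}j^{-1/\alpha}<\infty$. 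For $\alpha\ge1$ the top-$k$ order statistics already overshoot $k^{d/\alpha}$, and one must exploit that a connected set of size $k$ cannot capture many well-separated large sites: writing $\sum_{x\in A}\omega_x\le\sum_{j\ge0}2^{j+1}\#\{x\in A:\omega_x>2^{j}\}$, the key point is $\#\{x\in A:\omega_x>2^{j}\}\le C(\log N)\,k\,2^{-j\alpha/d}$, obtained by controlling (uniformly over $j$ and over dyadic sub-balls, via Chernoff) the local density $\lesssim 2^{-j\alpha}$ of the sites with $\omega>2^j$ — so they are $\gtrsim 2^{j\alpha/d}$-separated and connecting $m$ of them costs $\gtrsim m\,2^{j\alpha/d}$ lattice sites — and feeding this into the dyadic sum, which (as $1-\alpha/d>0$) is dominated by its top term $\asymp k^{d/\alpha}$. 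Making these counting/separation inputs simultaneous in all $k\le N+1$ on a single event $E_N$ is the step I expect to require the most care; it is the discrete counterpart of the finiteness of the variational constants $\hat\cT_\infty,\cT_\beta$ appearing above and runs closely parallel to the range-penalty estimates used for the regions $R_4$ and $R_5$.
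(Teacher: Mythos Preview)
Your proof is correct, and the geometric idea underlying your bound on $\Theta_k$ for $\alpha\ge 1$ --- that a connected set of size $k$ can contain at most $\sim k/r$ points from an $r$-separated set, hence at most $\sim k\,2^{-j\alpha/d}$ sites with $\omega>2^j$ once one controls the local density of such sites --- is exactly the content of the paper's Theorem~\ref{thm:transf-rangentr} and Proposition~\ref{lem:Omega}, just packaged differently. Your separation step can be made precise as follows: on the event that every $r_j$-ball inside $\Lambda_N$ contains at most $C\log N$ sites with $\omega>2^j$ (a Chernoff $+$ union bound over dyadic $j$ and lattice centers), take a maximal $r_j$-separated subset of the high sites in $A$; connectedness forces $|A|\ge c\,\ell\,r_j$ (each of the $\ell$ disjoint balls $B(y_i,r_j/3)$ must contain a path to its boundary), and maximality gives $m\le \ell\cdot C\log N$, whence $m\le C'k(\log N)\,2^{-j\alpha/d}$. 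Feeding this into the layer-cake sum, truncated at $2^j\sim k^{d/\alpha}$, yields $\Theta_k\le Ck^{d/\alpha}(\log N)^c$ as you claim.

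The paper's route is different in organization. Rather than controlling $\Theta_k$ uniformly in $k$, it splits $\bZ_N(|\cR_N|\ge 3)$ according to whether $M_N\le A_N:=N^{1-\alpha/d}$ or not. For $M_N\le A_N$ the crude bound $\sum_{x\in\cR_N}\omega_x\le k\,\bM_1^{(A_N)}$ already suffices, since $\bM_1^{(A_N)}\lesssim A_N^{d/\alpha}=N^{(d-\alpha)/\alpha}\ll h_N/\beta_N$ precisely by $\gamma>\zeta+\tfrac{d-\alpha}{\alpha}$. For $M_N>A_N$ the paper invokes Lemma~\ref{lem:g>zda}, which decomposes dyadically in \emph{both} $M_N$ and $|\cR_N|$ and appeals to Proposition~\ref{lem:Omega}; this gives the sharper bound $\Omega_r(\cR_N)\lesssim r^{d/\alpha}s$ for trajectories with $M_N\sim r$ and $|\cR_N|\sim rs$, i.e.\ essentially $r^{d/\alpha-1}|\cR_N|$ rather than your $|\cR_N|^{d/\alpha}$. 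Your bound is the worst case ($r\sim|\cR_N|$, a fully stretched path) of theirs. This coarsening is harmless in $R_6$, where only the combined ratio $\beta_N\Theta_k/(h_N k)$ matters and the condition $\gamma>\zeta+\tfrac{d-\alpha}{\alpha}$ kills it; the paper's finer two-parameter estimate, however, is what they reuse verbatim in the proof for $R_5$ (case (1) of Theorem~\ref{thm:R5a}), where the extra dependence on $M_N$ is essential. So your approach is more self-contained for $R_6$ alone, while the paper's pays the cost of the $M_N$-split in exchange for a tool (Lemma~\ref{lem:g>zda}) that does double duty.
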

\subsection{Extended results in the regions $R_1, R_2$ and $R_3$}\label{sec:results+}
In this section, we extend the results in \cite{BTW21} to our setting, corresponding to the regions $R_1, R_2$ and $R_3$.

The region $R_1$ is the diffusive regime, in which the scaling limits of $\log \bZ_N$ or $\bZ_N$ depend crucially on the relationship between $\alpha$ and $d$. We state these results separately in Theorems~\ref{thm:R1a}--\ref{thm:R1c}.

\begin{theorem}\label{thm:R1a}
Let $\ga\in(2\vee\frac{d}{2},d)$ (in particular $d\geq 3$) and $(\zeta,\gamma)\in R_1$ (see Figure \ref{diagram2}). Then $(S_n)_{0\leq n \leq N}$ has end-to-end fluctuations of order $N^{\xi}$ under $\bP_N$ with $\xi=\frac12$. Without loss of generality, we assume that ${\rm Var_\mathbb{P}}(\omega)=1$. Take $a_N=N^{1/4}$ for $d=3$, $a_N=(\log N)^{1/2}$ for $d=4$, and $a_N=1$ for $d\geq 5$. Then as $N\to\infty$,
\begin{equation}\label{eq:a>2}
\frac{1}{a_N\beta_N}\left(\bZ_N-\bE\left[e^{(\frac{1}{2}\beta_N^2-h_N)|\cR_N|}\right]\right)\overset{(d)}{\longrightarrow}\begin{cases}
\cN(0,\sigma_d^2),\quad&\text{for}~d=3,4,\\
\cX,\quad&\text{for}~d\geq5,
\end{cases}
\end{equation}
where $\cN(0,\sigma_d^2)$ is a centered Gaussian distribution with an explicit variance $\sigma_d^2$ and
\begin{equation}\label{def:cX}
\cX:=\sum\limits_{x\in\bbZ^d}\omega_x\bP(x\in\cR_\infty)=\bE\left[\sum\limits_{x\in\cR_\infty}\omega_x\right],
\end{equation}
which is a well-defined random variable.
\end{theorem}
The above theorem is a counterpart to equation (2.7) in \cite[Theorem~2.12]{BTW21}. The main modification is that, in \eqref{eq:a>2}, we adopt a different centering for $\bZ_N$ due to the presence of the external field $h_N$.

\begin{theorem}\label{thm:R1b}
Let $\ga\in(\frac{d}{2},2)$ (in particular $d=2,3$) and $(\zeta,\gamma)\in R_1$ (see Figure \ref{diagram2}). Then $(S_n)_{0\leq n \leq N}$ has end-to-end fluctuations of order $N^{\xi}$ under $\bP_N$ with $\xi=\frac12$ and we have the following scaling limits for $\bZ_N$.

\noindent{\rm (i)} If $\zeta\in(1,\gamma+\frac{d(\alpha-1)}{2\alpha})$, then
\begin{equation}\label{eq:new_phase}
\frac{1}{h_N\bE[|\cR_N|]}\log\bZ_N\overset{\bbP}{\longrightarrow}-1.
\end{equation}

\noindent{\rm (ii)} If $\zeta>\gamma+\frac{d(\alpha-1)}{2\alpha}$, then setting $v_N=\log N$ for $d=2$ and $v_N=\sqrt{N}$ for $d=3$, we have that
\begin{equation}\label{eq:old_phase}
\frac{v_N}{\beta_N N^{\frac{d}{2\ga}}}\log\bZ_N\overset{(d)}{\longrightarrow}\cW,
\end{equation}
where
\begin{equation}\label{def:cW}
\cW:=\int_{\bbR^d\times\bbR}\omega f(x)(\cP-\eta)(\dd x,\dd w)
\end{equation}
is a well-defined random variable with
\begin{equation}
f(x):=\begin{cases}
\int_{\|x\|^2/2}^{\infty}u^{-1}e^{-u}\dd u,\quad&\text{if}~d=2,\\
2\lambda_d\int_0^1\rho_d(u,x)\dd u,\quad&\text{if}~d=3,
\end{cases}
\end{equation}
a Poisson point process $\cP$ with intensity measure $\eta(\dd x,\dd\omega)=\alpha p^{\ind_{\{\omega>0\}}}q^{\ind_{ \{\omega<0\}}}|\omega|^{-(1+\alpha)}\dd x\dd \omega$, $\lambda_d=\bP(S_n\neq0,\forall n\geq1)$, and the Gaussian kernel $\rho_d(t,x)=(2\pi t/d)^{-d/2}\exp(-d\|x\|^2/2t)$.
\end{theorem}
The above theorem is a counterpart to equation (2.8) in \cite[Theorem 2.12]{BTW21}. Note that we find a new phase transition along the critical line $\zeta=\gamma+\frac{d(\alpha-1)}{2\alpha}$. The phase described in \eqref{eq:new_phase} is new, where the range penalty dominates the energy gain in the higher order terms of $\bZ_N$.

\begin{theorem}\label{thm:R1c}
Let $\ga\in(0,\frac{d}{2})$ and $(\zeta,\gamma)\in R_1$ (see Figure \ref{diagram3}). Then $(S_n)_{0\leq n \leq N}$ has end-to-end fluctuations of order $N^{\xi}$ under $\bP_N$ with $\xi=\frac12$ and we have the following scaling limits of $\bZ_N$.

\noindent{\rm (i)} If $\ga\in(\frac{d}{d-2},\frac{d}{2})$ (in particular $d\geq5$, $\ga>1$), then
\begin{equation}\label{eq:old1}
\frac{1}{\beta_N}\left(\bZ_N-\bE\left[e^{-h_N|\cR_N|}\right]\right)\overset{(d)}{\longrightarrow}\cX,
\end{equation}
where $\cX$ has been defined in \eqref{def:cX}.

\noindent{\rm (ii)} If $\ga<(\frac{d}{d-2}\wedge\frac{d}{2})$ (in particular $\alpha<2$) and $\alpha\neq1$, then we have the following.

{\rm (iia)} If $\zeta\in(1,\gamma+\frac{d(\ga-1)}{2\ga})$, then we have that
\begin{equation}\label{eq:new}
\frac{1}{h_N\bE[|\cR_N|]}\log\bZ_N\overset{\bbP}{\longrightarrow}-1.
\end{equation}

{\rm (iib)} If $\zeta>\gamma+\frac{d(\ga-1)}{2\ga}$, then setting $v_N=\log N$ for $d=2$ and $v_N=N^{\frac{d}{2}-1}$ for $d\geq3$, we have that
\begin{equation}\label{eq:old2}
\frac{v_N}{\beta_N N^{\frac{d}{2\ga}}}\log\bZ_N\overset{(d)}{\longrightarrow}\cW,
\end{equation}
where $\cW$ has been defined in \eqref{def:cW}.
\end{theorem}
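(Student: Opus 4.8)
\emph{Proof plan.} We are in region $R_1$ of Figure~\ref{diagram3}, where $\zeta>1$ and $\gamma>\frac{d-\alpha}{\alpha}>1$ (the last inequality because $\alpha<\frac d2$); consequently $h_N\bE[|\cR_N|]\asymp N^{1-\zeta}\to 0$ (up to a $\log N$ for $d=2$, since $\bE[|\cR_N|]\sim c_dN$ for $d\ge3$ and $\sim\pi N/\log N$ for $d=2$) and $\beta_N^2\bE[|\cR_N|]\asymp N^{1-2\gamma}=o(\beta_N)$. The backbone of the proof is that of \cite[Theorem~2.12]{BTW21}, with the extra factor $e^{-h_N|\cR_N|}$ inserted; because $|\cR_N|$ concentrates around $\bE[|\cR_N|]$ and $h_N\bE[|\cR_N|]\to0$, this factor will only affect the centering/normalisation. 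The end-to-end exponent $\xi=\frac12$ (in the sense of Definition~\ref{def:xi}) is obtained, as in \cite{BTW21}, by comparing $\bZ_N$ with its restriction to $\{\max_{0\le n\le N}|S_n|\in[\eta,\eta^{-1}]\sqrt N\}$, using $e^{-h_N|\cR_N|}\le1$ for the upper comparison and $e^{-h_N|\cR_N|}\ge e^{-h_N(N+1)}\to1$ for the lower one, which reduces it to the corresponding diffusivity estimate for the $h\equiv0$ model.

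\emph{Part (i).} Here $\alpha\in(\frac{d}{d-2},\frac d2)$ forces $d\ge5$ and, since $\bP(x\in\cR_\infty)\sim c_d|x|^{2-d}$, makes $\sum_x\bP(x\in\cR_\infty)^\alpha<\infty$, so $\cX$ is a.s.\ well defined. Writing $\Sigma_N:=\sum_{x\in\cR_N}\omega_x$, one has
\[
\tfrac{1}{\beta_N}\bigl(\bZ_N-\bE[e^{-h_N|\cR_N|}]\bigr)=\tfrac{1}{\beta_N}\bE\bigl[e^{-h_N|\cR_N|}\bigl(e^{\beta_N\Sigma_N}-1\bigr)\bigr].
\]
On the $\bbP$-event that $B(0,C\sqrt N)$ carries no $\omega_x$ with $|\omega_x|>\epsilon/\beta_N$ --- which has probability $1-o(1)$ because $N^{d/2}\bbP(|\omega|>\epsilon/\beta_N)\asymp N^{d/2-\alpha\gamma}\to0$ as $\gamma>\frac{d}{2\alpha}$ --- and on the diffusive event $\{\cR_N\subset B(0,C\sqrt N)\}$ (the rest contributing $o(\beta_N)$), we Taylor expand $e^{\beta_N\Sigma_N}-1=\beta_N\Sigma_N+O(\beta_N^2\Sigma_N^2)$ uniformly. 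The main term equals $\sum_x\omega_x\,\bE[e^{-h_N|\cR_N|}\ind_{x\in\cR_N}]$, and since $0\le\bE[e^{-h_N|\cR_N|}\ind_{x\in\cR_N}]\le\bP(x\in\cR_N)\uparrow\bP(x\in\cR_\infty)$ and $\bE[e^{-h_N|\cR_N|}\ind_{x\in\cR_N}]\to\bP(x\in\cR_\infty)$ pointwise (using $h_N|\cR_N|\to0$), this converges to $\cX$ in $\bbP$-probability by the estimates that make $\cX$ well defined ($L^2$/dominated convergence in $x$ when $\alpha>2$, since $\sum_x\bP(x\in\cR_\infty)^2<\infty$ for $d\ge5$; a truncation/three-series argument when $\alpha\in(1,2)$). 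The remainder $\beta_N^{-1}\cdot O(\beta_N^2\bE[e^{-h_N|\cR_N|}\Sigma_N^2])$ is $o(1)$ by the same kind of bound. This is exactly \cite[proof of (2.7)]{BTW21} with ``$1$'' replaced by ``$e^{-h_N|\cR_N|}$'', and the centering is $\bE[e^{-h_N|\cR_N|}]$ (with no $\tfrac12\beta_N^2$ term, precisely because $\gamma>1$).

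\emph{Part (ii).} Now $\alpha<\frac{d}{d-2}$, so $\sum_x\bP(x\in\cR_\infty)^\alpha=\infty$: no $\cX$-limit is possible, and one studies $\log\bZ_N$, which is the sum of the essentially deterministic range penalty $-h_N|\cR_N|\approx-h_N\bE[|\cR_N|]\asymp-N^{1-\zeta}$ and the energy reward, whose order --- from \cite[(2.8)]{BTW21} --- is $\tfrac{\beta_N N^{d/2\alpha}}{v_N}\asymp N^{\frac{d}{2\alpha}-\frac d2+1-\gamma}$ for $d\ge3$ (resp.\ $N^{1/\alpha-\gamma}/\log N$ for $d=2$). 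A one-line computation shows the penalty dominates exactly when $\zeta<\gamma+\frac{d(\alpha-1)}{2\alpha}$ and the energy when $\zeta>\gamma+\frac{d(\alpha-1)}{2\alpha}$, which is the dichotomy (iia)/(iib). In case (iib), $h_N\bE[|\cR_N|]=o(\tfrac{\beta_N N^{d/2\alpha}}{v_N})$, so after taking logarithms the external field is negligible and \eqref{eq:old2} follows by transcribing \cite[proof of (2.8)]{BTW21}: a reduction to paths visiting only finitely many exceptional sites (those with $\omega_x\gtrsim N^{d/2\alpha}$); a diffusive rescaling under which their (location, value) pairs converge to the Poisson point process $\cP$ with intensity $\eta$ and the reward of hitting a rescaled location $x$ converges to $f(x)$ (hitting-probability asymptotics, with the $\log N$-correction giving $v_N=\log N$ for $d=2$ and $\lambda_d,\rho_d$ giving $v_N=N^{d/2-1}$ for $d\ge3$); and continuity of $\cP\mapsto\int\omega f(x)(\cP-\eta)(\dd x,\dd\omega)$ (compensation being needed precisely for $\alpha\in(1,2)$, whence the exclusion $\alpha\neq1$). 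In case (iia) we must show $\log\bZ_N\sim-h_N\bE[|\cR_N|]$ in probability. For the upper bound, split the walk expectation on $\{|\cR_N|\ge(1-\varepsilon)\bE[|\cR_N|]\}$, where $e^{-h_N|\cR_N|}\le e^{-(1-\varepsilon)h_N\bE[|\cR_N|]}$ leaves the pure-energy partition function with $\log\bE[e^{\beta_N\Sigma_N}]\asymp\tfrac{\beta_N N^{d/2\alpha}}{v_N}=o(h_N\bE[|\cR_N|])$, while on the complement one uses the confinement estimate of \cite{BTW21} (a path with $|\cR_N|\le m$ optimally stays in a ball of radius $\asymp m^{1/d}$, hence has $\bP$-probability $\lesssim e^{-cNm^{-2/d}}$ and energy $\lesssim\beta_N m^{1/\alpha}$, whence, optimising over $m\le(1-\varepsilon)\bE[|\cR_N|]$, this contribution is $\le e^{-cN^{1-2/d}(1+o(1))}$, negligible next to the first term since $\zeta>2/d$); this gives $\limsup\tfrac{\log\bZ_N}{h_N\bE[|\cR_N|]}\le-(1-\varepsilon)$. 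For the lower bound, keep only paths in $G_N:=\{|\cR_N|\le(1+\varepsilon)\bE[|\cR_N|]\}\cap\{\Sigma_N\ge-\varepsilon h_N\bE[|\cR_N|]/\beta_N\}$, on which $\beta_N\Sigma_N-h_N|\cR_N|\ge-(1+2\varepsilon)h_N\bE[|\cR_N|]$; concentration of $|\cR_N|$ together with a lower bound on the left tail $\bP(\Sigma_N\ge-t)$ of the heavy-tailed sum (this uses $q>0$, as noted after \eqref{def:model}) give $\bP(G_N)\ge c>0$, so $\liminf\tfrac{\log\bZ_N}{h_N\bE[|\cR_N|]}\ge-(1+2\varepsilon)$; letting $\varepsilon\downarrow0$ yields \eqref{eq:new}.

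\emph{Main obstacle.} The genuinely new content is case (iia). It needs (1) a sufficiently sharp upper bound on $\log\bE[e^{\beta_N\Sigma_N}]$, with the $v_N$-factor, imported carefully from \cite{BTW21} --- a crude bound would fail to beat the penalty; (2) the confinement estimate excluding anomalously small ranges, which is more delicate for $d=2$, where $|\cR_N|$ fluctuates only at a logarithmic scale; and (3) the left-tail lower bound $\bP(\Sigma_N\ge-t)\ge c>0$ at the threshold $t\asymp N^{1-\zeta+\gamma}$, i.e.\ control of the left tail of a sum of $\asymp N$ i.i.d.\ heavy-tailed variables, which is exactly where $q>0$ enters. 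Everything else is a careful transcription of \cite{BTW21} with the harmless extra factor $e^{-h_N|\cR_N|}$.
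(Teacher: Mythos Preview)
Your treatment of part (i) and of (iib) is essentially the paper's: for (i) you run the truncation/chaos expansion of \cite[Section~7.1]{BTW21} with the harmless extra factor $e^{-h_N|\cR_N|}$ (the paper does exactly this, noting that the H\"older trick cannot handle the nontrivial centering here), and for (iib) you observe that $h_N\bE[|\cR_N|]=o(\beta_N N^{d/2\alpha}/v_N)$ and reduce to \cite{BTW21}. No objection there.

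The genuine gap is in your lower bound for (iia). You restrict to the event $G_N$ and claim that $\bP(G_N)\ge c>0$ yields $\liminf\frac{\log\bZ_N}{h_N\bE[|\cR_N|]}\ge -(1+2\varepsilon)$. But $h_N\bE[|\cR_N|]\to 0^+$, so from $\bZ_N\ge e^{-(1+2\varepsilon)h_N\bE[|\cR_N|]}\bP(G_N)$ you only get
\[
\frac{\log\bZ_N}{h_N\bE[|\cR_N|]}\ \ge\ -(1+2\varepsilon)+\frac{\log\bP(G_N)}{h_N\bE[|\cR_N|]},
\]
and the last term tends to $-\infty$ unless $1-\bP(G_N)=o(h_N\bE[|\cR_N|])$. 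Merely $\bP(G_N)\ge c$ is useless. Moreover, even the stronger requirement $\bP(G_N)\to 1$ need not hold throughout (iia): the $\bP\times\bbP$ left-tail bound for $\Sigma_N$ you invoke gives $\bbP(\Sigma_N<-t)\lesssim |\cR_N|\,t^{-\alpha}$, which for $t=\varepsilon h_N\bE[|\cR_N|]/\beta_N\asymp N^{1-\zeta+\gamma}$ requires $\gamma>\zeta+\frac{1-\alpha}{\alpha}$, whereas (iia) only guarantees $\gamma>\zeta+\frac{d(1-\alpha)}{2\alpha}$; for $\alpha\in(1,2)$ and $d\ge 3$ the latter is strictly weaker, so there is a nonempty sub-region of (iia) where your argument cannot close. (Your upper bound for $d=2$ has a similar rate problem: the confinement bound $e^{-cN^{1-2/d}}$ degenerates to a constant and is not $o(h_N\bE[|\cR_N|])$.)

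The paper bypasses all of this by a two-line H\"older decoupling used already for the region $R_4$: for every $\epsilon\in(0,1)$,
\[
\Big(\bZ_{N,\beta_N=0}^{\omega,(1-\epsilon)h_N}\Big)^{\frac{1}{1-\epsilon}}\Big(\bZ_{N,-\frac{1-\epsilon}{\epsilon}\beta_N}^{\omega,h_N=0}\Big)^{-\frac{\epsilon}{1-\epsilon}}
\ \le\ \bZ_N\ \le\
\Big(\bZ_{N,\beta_N=0}^{\omega,(1+\epsilon)h_N}\Big)^{\frac{1}{1+\epsilon}}\Big(\bZ_{N,\frac{1+\epsilon}{\epsilon}\beta_N}^{\omega,h_N=0}\Big)^{\frac{\epsilon}{1+\epsilon}}.
\]
Taking logarithms, dividing by $h_N\bE[|\cR_N|]$, using $\log\bZ_{N,\beta_N=0}^{\omega,\theta h_N}\sim -\theta h_N\bE[|\cR_N|]$ (since $\zeta>1$) and $\log\bZ_{N,\delta\beta_N}^{\omega,h_N=0}\asymp \beta_N N^{d/2\alpha}/v_N=o(h_N\bE[|\cR_N|])$ (this is exactly the (iia) condition $\zeta<\gamma+\frac{d(\alpha-1)}{2\alpha}$), and then sending $\epsilon\downarrow 0$ gives \eqref{eq:new} directly, with no confinement estimate, no range large deviations, and no left-tail control on $\Sigma_N$. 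Your approach can likely be repaired, but the H\"older route is both shorter and uniform in $d$ and $\alpha$.
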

The above theorem is a counterpart to \cite[Theorem 2.15]{BTW21}. As in Theorem \ref{thm:R1a}, the centering for $\bZ_N$ in \eqref{eq:old1} is modified due to the presence of the external field $h_N$. We also note that, in the case $\alpha\in(0,\frac d2)$, the statement of our results is much simpler than that in \cite[Theorem 2.15]{BTW21}. Since we restrict attention to the interior of the region $R_1$, where $\lim_{N\to\infty}N^{(d-\alpha)/\alpha}\beta_N=0$, the key quantity $\hat{\cT}_\beta$ in \cite[Theorem 2.2, Proposition 2.5, Theorem 2.15]{BTW21} satisfies $\hat{\cT}_\beta=\hat{\cT}_0=0$, $\bbP$-a.s.. Consequently, there is no need to emphasize the conditioning on the event $\{\hat{\cT}_\beta=0\}$ as in \cite[Theorem 2.15]{BTW21}.

\begin{remark}
It is natural to ask whether, in \eqref{eq:a>2} and \eqref{eq:old1}, one can obtain convergence to a deterministic limit by replacing the scaling factor with $h_N \bE[|\cR_N|]$, as in \eqref{eq:new_phase} and \eqref{eq:new}. Although this problem is tractable, we choose not to pursue it in detail for the following two reasons.

First, such an approach would require a comparison between $h_N \bE[|\cR_N|]$ and $a_N \beta_N$ in \eqref{eq:a>2}, analogous to the comparison between $h_N \bE[|\cR_N|]$ and $\beta_N N^{d/2\alpha}$ carried out in \eqref{eq:new_phase} and \eqref{eq:new} (see Appendix~\ref{S-R123} for details). However, the factor $a_N$ has different expressions depending on the dimension $d$, and a case-by-case analysis would lead to rather lengthy statements of the results.

Second, to obtain a deterministic limit with scaling factor $h_N \bE[|\cR_N|]$, one would need to introduce different centering terms that do not depend on $h_N$. This would result in an unnatural comparison with the existing formulations in \eqref{eq:a>2} and \eqref{eq:old1}.

Given that we have already identified additional phase transitions in Theorem~\ref{thm:R1b} and case~(ii) of Theorem~\ref{thm:R1c}, we consider the formulations in \eqref{eq:a>2} and \eqref{eq:old1} to be sufficient.
\end{remark}

The region $R_2$ corresponds to a super-diffusive regime, where the energy gain and the entropy cost are balanced. We have the following result.
\begin{theorem}\label{thm:B}
Let $\ga\in (\frac{d}{2},d)$ and $(\zeta,\gamma) \in R_2$ (see Figure \ref{diagram2}).
Then $(S_n)_{0\leq n \leq N}$ has end-to-end fluctuations of order $N^{\xi}$ under $\bP_N$, where $\xi= \frac{\alpha (1-\gamma)}{2\ga-d} \in (\frac12,1)$.
Additionally, as $N\to\infty$, we have the following convergence in distribution to a well-defined random variable,
\begin{equation}
\label{eq:VarPr}
\frac{1}{N^{2\xi -1}}\log \bZ_N \overset{(d)}{\longrightarrow}\cT_{\hat{\beta}} :=\sup_{s\in \cD, \ent(s) <+\infty} 
\big\{ \hat{ \gb} \pi(s) -  \ent(s) \big\}.
\end{equation}
Here $\pi(s):=\sum_{(x,\omega)\in\cP}\omega\mathbbm{1}_{\{x\in s[0,1]\}}$, where $\cP$ is the  Poisson point process introduced in Theorem \ref{thm:R1b} and
\begin{equation}
\cD:=\{s:[0,1]\to\bbR^d: s(0)=0.~s~\text{is continuous and a.e. differentiable}\},
\end{equation}
and
\begin{equation}
\ent(s):=\frac d2\Big(\int_0^1\|s'(t)\|\dd t\Big)^2.
\end{equation}
\end{theorem}
The above theorem coincides with \cite[Theorem~2.7]{BTW21}.

The region $R_3$ corresponds to a super-diffusive regime, where the energy gain dominates. We have the following result.
\begin{theorem}\label{thm:A}
Let $\ga\in (0,d)$ and $(\zeta,\gamma)\in R_3$ (see Figures \ref{diagram2} and \ref{diagram3}). Then $(S_n)_{0\leq n \leq N}$ has end-to-end fluctuations of order $N$ under $\bP_N$, and as $N\to\infty$, we have the following convergence in distribution to a well-defined random variable, 
\begin{equation}\label{def:varprob1}
\frac{1}{ \gb_N N^{d/\alpha}} \log \bZ_N  \stackrel{(d)}{ \longrightarrow} \hat\cT_\infty:= \sup_{s\in \cD, \hatent(s) <+\infty}\pi(s).
\end{equation}
Here
\begin{equation}
\hatent(s):=\inf_{\varphi\in\Phi}\int_0^1 J_d((s\circ\varphi)'(t))\dd t,
\end{equation}
where $J_d(\cdot)$ is a rate function obtained by
\begin{equation}
J_d(x):=-\lim\limits_{N\to\infty}\frac{1}{N}\log\bP(S_N=\lfloor xN\rfloor)
\end{equation}
and
\begin{equation}
\Phi:=\{\varphi:[0,1]\overset{\rm onto}{\longrightarrow}[0,1]: \varphi~\text{is non-decreasing and a.e. differentiable}\}.
\end{equation}
\end{theorem}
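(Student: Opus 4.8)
The plan is to deduce Theorem~\ref{thm:A} from its analogue for the model with a \emph{fixed} external field, established in \cite{BTW21}, by checking that the time‑dependent penalty $h_N|\cR_N|$ is negligible at the scale $\beta_N N^{d/\alpha}$ throughout the interior of $R_3$. Fix a constant $h>0$; let $\bZ_N^{(h)}:=\bE\big[\exp\big(\sum_{x\in\cR_N}(\beta_N\omega_x-h)\big)\big]$ and let $\bP_N^{(h)}$ be the corresponding polymer measure $\dd\bP_N^{(h)}/\dd\bP=(\bZ_N^{(h)})^{-1}\exp\big(\sum_{x\in\cR_N}(\beta_N\omega_x-h)\big)$ — this is exactly the object treated in \cite{BTW21}. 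Under our standing hypotheses ($\bbE[\omega]=0$ when $\alpha>1$, and no centring when $\alpha\le1$) our environment coincides with theirs, so the corresponding statement there (cf.\ \cite{BTW21}) gives that $\hat\cT_\infty$ is a well‑defined random variable and that $(\beta_N N^{d/\alpha})^{-1}\log\bZ_N^{(h)}\overset{(d)}{\longrightarrow}\hat\cT_\infty$, together with the ballisticity estimate used in Step~2. It then remains only to transfer these facts to $\bZ_N,\bP_N$.

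\emph{Step 1 (partition function).} Writing $\sum_{x\in\cR_N}(\beta_N\omega_x-h_N)=\sum_{x\in\cR_N}(\beta_N\omega_x-h)+(h-h_N)|\cR_N|$ and using the deterministic bounds $1\le|\cR_N|\le N+1$, $h_N\ge0$, $h>0$, one obtains the elementary sandwich
\[
e^{-h_N(N+1)}\,\bZ_N^{(h)}\ \le\ \bZ_N\ \le\ e^{h(N+1)}\,\bZ_N^{(h)},
\]
hence $\big|\log\bZ_N-\log\bZ_N^{(h)}\big|\le(h+h_N)(N+1)$. With $\beta_N=\hat{\beta} N^{-\gamma}$, $h_N=\hat{h} N^{-\zeta}$, dividing by $\beta_N N^{d/\alpha}=\hat{\beta}\,N^{d/\alpha-\gamma}$ leaves an error of order $N^{1-d/\alpha+\gamma}+N^{1-\zeta-d/\alpha+\gamma}$, and the inequality $\gamma<(\zeta\wedge0)+\tfrac{d-\alpha}{\alpha}$ defining the interior of $R_3$ is precisely $1-\tfrac d\alpha+\gamma<0$ together with $1-\zeta-\tfrac d\alpha+\gamma<0$. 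Thus the error tends to $0$ and, by Slutsky, $(\beta_N N^{d/\alpha})^{-1}\log\bZ_N\overset{(d)}{\longrightarrow}\hat\cT_\infty$.

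\emph{Step 2 (end‑to‑end fluctuations).} Since $\max_{0\le n\le N}|S_n|\le N$ always, only the lower inclusion in Definition~\ref{def:xi} needs proof, namely that for every $\eta\in(0,1)$ one has $\bP_N\big(\max_n|S_n|<\eta N\big)\to0$ with $\bbP$‑probability tending to $1$. The ballisticity analysis of \cite{BTW21} shows that confining the walk to the ball of radius $\eta N$ reduces the attainable energy by a multiplicative factor, providing, with $\bbP$‑probability $\to1$, a bound $\bP_N^{(h)}\big(\max_n|S_n|<\eta N\big)\le e^{-c\,\beta_N N^{d/\alpha}}$ for some $c=c(\eta)>0$. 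By Step~1 the density $\frac{\dd\bP_N}{\dd\bP_N^{(h)}}(S)=\frac{\bZ_N^{(h)}}{\bZ_N}\,e^{(h-h_N)|\cR_N|}$ is at most $e^{(h+h_N)(N+1)}$, so $\bP_N\big(\max_n|S_n|<\eta N\big)\le e^{(h+h_N)(N+1)-c\beta_N N^{d/\alpha}}$; as $(h+h_N)(N+1)=o(\beta_N N^{d/\alpha})$ on $R_3$ (same computation) while $\beta_N N^{d/\alpha}=\hat{\beta}N^{d/\alpha-\gamma}\to\infty$ there (because $\gamma<\tfrac{d-\alpha}{\alpha}<\tfrac d\alpha$), the right‑hand side tends to $0$, which gives the asserted end‑to‑end fluctuations of order $N$.

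The main difficulty is not in this reduction — which is elementary — but in what it imports from \cite{BTW21}: the variational identity for $\hat\cT_\infty$ and its well‑definedness, and the exponential confinement estimate of Step~2. The one point internal to the present argument that requires care is checking that the crude deterministic bound $|\cR_N|\le N+1$ is sufficient, i.e.\ that $(h+h_N)(N+1)\ll\beta_N N^{d/\alpha}$; this is exactly the content of the strict inequalities cutting out the interior of $R_3$, and it degenerates on $\partial R_3$, which is why the boundaries are excluded throughout.
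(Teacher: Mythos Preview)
Your proposal is correct and takes essentially the same approach as the paper: a deterministic sandwich showing that the range penalty is $o(\beta_N N^{d/\alpha})$ on the interior of $R_3$, then importing both the convergence \eqref{def:varprob1} and the end-to-end fluctuations from \cite{BTW21}. The only cosmetic differences are that the paper compares with $h=0$ rather than a fixed $h>0$ (so only the single error term $h_N N\sim N^{1-\zeta}$ appears), and for Step~2 it simply says to re-run the method of \cite[Sections~5.4,~6.2]{BTW21} rather than transferring via the Radon--Nikodym bound as you do---your route is cleaner but relies on the quantitative exponential confinement estimate you invoke.
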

The above theorem is a special version of \cite[Theorem 2.2, Proposition 2.4]{BTW21}. Since we restrict attention to the interior of the region $R_3$, where $\lim_{N\to\infty}N^{(d-\alpha)/\alpha}\beta_N=\infty$, the condition $\hat{\cT}_{\infty}>0$ in \cite[Proposition 2.4]{BTW21} always holds.

\subsection{Heuristics}\label{sec:heuristics}
In this section, we discuss how the phase diagrams in Section~\ref{sec:phasediagram} are heuristically derived. We restrict our attention to the case $\alpha<d$, where $\alpha$ is the exponent in \eqref{def:omega}. The reason is that the effects of the random environment differ substantially between the cases $\alpha<d$ and $\alpha>d$, and therefore require different approaches.

When $\alpha < d$, the tail of $\omega$ is sufficiently heavy that the main contribution to $\sum_{x \in \cR_N} \omega_x$ is dominated by a few exceptionally large random variables. In contrast, when $\alpha > d$, the lighter tail implies that the sum is determined collectively by many moderate-valued random variables. The case $\alpha > d$ presents significant technical challenges and remains largely open (see also \cite[Section~2.5.4]{BTW21}). 

We now provide a heuristic analysis of the scales of the energy gain from the random environment, the range penalty induced by the external field, and the entropy cost associated with super-diffusive or sub-diffusive paths. We also note that here we exclude the case $\alpha = \frac{d}{2}$, since it is critical (see the phase transition between Figures~\ref{diagram2} and \ref{diagram3}), and a more delicate analysis is required. We will specify in the proofs (see Sections~\ref{sec:R4}–\ref{sec:R5}) when the case $\alpha=\frac{d}{2}$ is included. Recall that $\xi$ is the end-to-end exponent.
\begin{itemize} \itemsep -1pt
\item[(1)] For $\alpha<d$, by the heavy-tailed property, we have that $\sum_{x\in\cR_N}\omega_x\approx N^{d\xi/\alpha}$. Hence, the energy gain $\sum_{x\in\cR_N}\beta_N\omega_x$ is of order $N^{d\xi/\alpha-\gamma}$. 
\item[(2)] The scale of the range penalty depends on whether $\xi<\frac1d$ or $\xi>\frac1d$.
\begin{itemize} \itemsep -1pt
\item[(2a)] If $\xi<\frac1d\leq\frac12$, that is, the random walk folds itself in a sub-diffusive ball and fills it (see \cite{BC21,DFSX20}), then $|\cR_N|\approx N^{d\xi}$ and the range penalty is of order $N^{d\xi-\zeta}$.
\item[(2b)] If $\xi>\frac1d$, then the walk has enough space to stretch regardless of folding. In this case, we expect that $|\cR_N|\approx N$ and the range penalty is of order $N^{1-\zeta}$.
\end{itemize}
\item[(3)] Finally, by classical large deviation estimates, the entropy cost is of order
\begin{itemize} \itemsep -1pt
\item[(3a)] $\exp(-N^{1-2\xi})$ in the sub-diffusive regime (i.e., $\xi\in[0,\frac12)$);
\item[(3b)] $\exp(-N^{2\xi-1})$ in the super-diffusive regime (i.e., $\xi\in(\frac12,1]$).
\end{itemize} 
\end{itemize}

We now analyze the behavior of the polymer and establish the phase diagram through the competition among the energy gain, the range penalty, and the entropy cost.

\vspace{0.2cm}
\textbf{Case I} (\textit{entropy cost dominates}) In this case, the random walk is diffusive, so $\xi=\frac12\geq\frac1d$. There is no additional entropy cost, and both the energy gain and the range penalty are negligible. In particular, this requires that the exponents of the energy gain and the range penalty be negative, namely,
\begin{equation}\label{eq:R1}
\frac{d}{2\alpha}-\gamma<0\implies\gamma>\frac{d}{2\alpha}\quad\text{and}\quad1-\zeta<0\implies\zeta>1.
\end{equation}

We refer to this region as $R_1$ in the phase diagrams.

\vspace{0.2cm}
\textbf{Case II} (\textit{entropy cost-energy gain balance}) In this case, the random environment encourages the polymer to explore, leading to a super-diffusive behavior with $\xi\in(\frac{1}{2}, 1)$. By comparing the exponents of the energy gain, the range penalty, and the entropy cost, we obtain
\begin{equation}\label{eq:R2a}
\frac{d\xi}{\ga}-\gamma=2\xi-1\implies\xi=\frac{\ga(1-\gamma)}{2\ga-d},
\end{equation}
and
\begin{equation}\label{eq:R2b}
\frac{d\xi}{\ga}-\gamma=2\xi-1>1-\zeta\implies\gamma<\frac{2\ga-d}{2\ga}\zeta+\frac{d-\ga}{\ga}.
\end{equation}
At the boundary points $\xi=\frac{1}{2}$ and $\xi=1$, we have that
\begin{equation}\label{eq:R2c}
\xi=\frac{1}{2}\implies\gamma=\frac{d}{2\ga},\zeta>1\quad\text{and}\quad\xi=1\implies\gamma=\frac{d-\ga}{\ga},\zeta>0.
\end{equation}

We refer to this region as $R_2$ in the phase diagrams.

\vspace{0.2cm}
\textbf{Case III} (\textit{energy gain dominates}) In this case, we have $\xi=1$, and the boundary $\gamma=\frac{d-\alpha}{\alpha}$, $\zeta>0$ remains the same as in Case II (see \eqref{eq:R2c}). However, the conditions \eqref{eq:R2a} and \eqref{eq:R2b} are replaced by
\begin{equation}\label{eq:R3}
\frac{d}{\alpha}-\gamma>1\implies\gamma<\frac{d-\alpha}{\alpha}\quad\text{and}\quad\frac{d}{\alpha}-\gamma>1-\zeta\implies\gamma<\zeta+\frac{d-\alpha}{\alpha},\quad\text{respectively}.
\end{equation}

We refer to this region as $R_3$ in the phase diagrams.

\vspace{0.2cm}
\textbf{Case IV} (\textit{entropy cost-range penalty balance}) In this case, the range penalty forces the random walk to fold, leading to a sub-diffusive behavior with $\xi\in(0,\frac12)$. Since $0<\frac{1}{d}\leq\frac{1}{2}$, we distinguish between two sub-cases, namely $\xi\in(0, \frac{1}{d})$ and $\xi\in(\frac{1}{d},\frac{1}{2})$.

For $\xi\in(\frac1d,\frac12)$, which occurs only when the dimension $d\geq3$, a comparison of the exponents yields
\begin{equation}\label{eq:R4a}
1-\zeta=1-2\xi\implies\xi=\frac{\zeta}{2},
\end{equation}
and
\begin{equation}\label{eq:R4b}
1-\zeta=1-2\xi>\frac{d\xi}{\ga}-\gamma\implies\gamma>\frac{d+2\ga}{\ga}\xi-1=\frac{d+2\ga}{2\ga}\zeta-1.
\end{equation}
At the boundary points $\xi=\frac1d$ and $\xi=\frac12$, we have that
\begin{equation}\label{eq:R4c}
\xi=\frac{1}{d}\implies\zeta=\frac{2}{d}, \gamma>\frac{d+2\ga-d\ga}{d\ga}\quad\text{and}\quad\xi=\frac{1}{2}\implies\zeta=1,\gamma>\frac{d}{2\ga}.
\end{equation}

We refer to this region as $R_4$ in the phase diagrams.

For $\xi\in(0,\frac1d)$, similarly, we have that
\begin{equation}\label{eq:R5a}
d\xi-\zeta=1-2\xi\implies\xi=\frac{1+\zeta}{d+2},
\end{equation}
and
\begin{equation}\label{eq:R5b}
d\xi-\zeta=1-2\xi>\frac{d\xi}{\ga}-\gamma\implies\gamma>\frac{(d+2\ga)(1+\zeta)}{\ga(d+2)}-1=\frac{d+2\ga}{\ga(d+2)}\zeta+\frac{d(1-\ga)}{\ga(d+2)}.
\end{equation}
At the boundary points $\xi=0$ and $\xi=\frac1d$, we have that
\begin{equation}\label{eq:R5c}
\xi=0\implies\zeta=-1,\gamma>\zeta\quad\text{and}\quad\xi=\frac{1}{d}\implies\zeta=\frac{2}{d},\gamma>\frac{d+2\ga-d\ga}{d\ga}.
\end{equation}

We refer to this region as $R_5$ in the phase diagrams.

\vspace{0.2cm}
\textbf{Case V} (\textit{range penalty dominates}) In this case, we have $\xi=0$, and the boundary $\zeta=-1,\gamma>\zeta$ remains the same as in the region $R_5$ (see \eqref{eq:R5c}). However, the conditions \eqref{eq:R5a} and \eqref{eq:R5b} are replaced by
\begin{equation}\label{eq:R6+}
\zeta<-1\quad\text{and}\quad\gamma>\zeta,\quad\text{respectively}.
\end{equation}

We refer to this region as $R_6$ in the phase diagrams.

\vspace{0.2cm}
\textbf{Case VI} (\textit{energy gain-range penalty balance}) In this case, the entropy cost is negligible, so $\xi\in(0,1)$. Similar to Case IV, we distinguish among three sub-cases, namely $\xi\in(0,\frac1d)$, $\xi\in(\frac1d,\frac12)$ and $\xi\in(\frac12,1)$.

For $\xi\in(0,\frac1d)$, we have that
\begin{equation*}
\frac{d\xi}{\ga}-\gamma=d\xi-\zeta\implies\xi=\frac{\ga(\zeta-\gamma)}{d(\ga-1)},
\end{equation*}
which implies
\begin{equation*}
\begin{cases}
\zeta+\frac{1-\ga}{\ga}<\gamma<\zeta,\quad&\text{if}~\ga>1,\\
\zeta+\frac{1-\ga}{\ga}>\gamma>\zeta,\quad&\text{if}~\ga<1.
\end{cases}
\end{equation*}
A comparison of the exponents yields
\begin{equation*}
\frac{d\xi}{\alpha}-\gamma=d\xi-\zeta>1-2\xi\implies\begin{cases}
\gamma<\frac{2\alpha+d}{\alpha(d+2)}\zeta+\frac{d(1-\alpha)}{\alpha(d+2)},\quad&\text{if}~\alpha>1,\\
\gamma>\frac{2\alpha+d}{\alpha(d+2)}\zeta+\frac{d(1-\alpha)}{\alpha(d+2)},\quad&\text{if}~\alpha<1.
\end{cases}
\end{equation*}

For $\xi\in(\frac1d,\frac12)$, we have that
\begin{equation}\label{eq:energy-penalty1}
\frac{d\xi}{\ga}-\gamma=1-\zeta\implies\xi=\frac{\ga}{d}(1+\gamma-\zeta),
\end{equation}
which implies
\begin{equation}\label{eq:energy-penalty2}
\zeta+\frac{1-\ga}{\ga}\leq\gamma\leq\zeta+\frac{d-\ga}{\ga}.
\end{equation}
A comparison of the exponents yields
\begin{equation*}
\frac{d\xi}{\ga}-\gamma=1-\zeta>1-2\xi\implies\begin{cases}
\zeta+\frac{1-\ga}{\ga}<\gamma<\zeta+\frac{d-2\ga}{2\ga},\\
\gamma>\frac{d+2\ga}{2\ga}\zeta-1.
\end{cases}
\end{equation*}

Finally, for $\xi\in(\frac12,1)$, we still have \eqref{eq:energy-penalty1} and \eqref{eq:energy-penalty2}. A comparison of the exponents yields
\begin{equation*}
\frac{d\xi}{\ga}-\gamma=1-\zeta>2\xi-1\implies\begin{cases}
\zeta+\frac{d-2\ga}{2\ga}<\gamma<\zeta+\frac{d-\ga}{\ga},\\
\gamma<\frac{2\ga-d}{2\ga}\zeta+\frac{d-\ga}{\ga}.
\end{cases}
\end{equation*}

However, as will be suggested by the analysis below, the regime corresponding to the energy gain–range penalty balance does not occur, as it is dominated by other regimes. Therefore, we do not assign a separate label (such as $R_n$) to this case.

\vspace{0.2cm}
The regions identified above exhibit some overlap. Evidence from existing studies (in particular, \cite[Comment 2(c)]{BHTW20}) suggests that, in overlapping regions, those in which the range penalty is non-negligible are always dominated, due to the well-posedness of the limiting variational problem for the logarithmic partition function. This observation is also supported by our main results (see Sections~\ref{sec:results} and \ref{sec:results+}). We now examine the overlapping cases in light of this principle.

$\bullet$ By a straightforward (albeit somewhat tedious) comparison, we observe that the boundaries $\gamma = \frac{2\alpha - d}{2\alpha}\zeta + \frac{d - \alpha}{\alpha}$ of the region $R_2$ and $\gamma = \zeta + \frac{d - \alpha}{\alpha}$ of the region $R_3$ are the top-leftmost among all candidate boundaries arising from overlapping regions. Consequently, these boundaries confine the regions in which the range penalty is non-negligible to their minimal extent. As a result, the regime corresponding to the energy gain–range penalty balance also disappears.

$\bullet$ Comparing the conditions $\gamma<\frac{d-\alpha}{\alpha}$ in Case III and $\gamma>\frac{d}{2\alpha}$ in Case I, we note that $\frac{d-\alpha}{\alpha}>\frac{d}{2\alpha}\Longleftrightarrow\alpha<\frac{d}{2}$. Hence, when $\alpha\in(0,\frac{d}{2})$, the region $R_2$ disappears, and the boundary between the regions $R_1$ and $R_3$ is given by $\gamma=\frac{d-\alpha}{\alpha}$.

\subsection{Further comments and conjectures}\label{sec:commconj}
To conclude Section \ref{sec:main_result}, we present several comments and conjectures concerning our model and results.

\vspace{0.2cm}
\noindent\textbf{(1) About the end-to-end exponent $\xi$.}
Recall the end-to-end exponent $\xi$ from Definition \ref{def:xi}. Following the approach in \cite{BTW21}, we are able to identify the end-to-end fluctuations in the regions $R_1$, $R_2$, and $R_3$, where the contribution of the range penalty to the leading-order term of $\bZ_N$ is negligible. In the region $R_6$, we show that $\xi = 0$ (in fact, we prove the stronger result that $\bP_N(|\cR_N|=2)\overset{N\to\infty}{\longrightarrow}1$ in $\bbP$-probability).

However, we are unable to rigorously verify that the candidate exponents $\xi = \frac{1+\zeta}{d+2}\in(0,\frac1d)$ in the region $R_5$ and $\xi = \frac{\zeta}{2}\in(\frac1d,\frac12)$ in the region $R_4$ satisfy Definition \ref{def:xi}. To establish this, one needs to show that the dominant contribution to $\bZ_N$ comes from paths with $M_N \approx N^\xi$ (see Section~\ref{ch:discuss} for a more detailed discussion). In this paper, we bypass this difficulty by directly comparing $\bZ_N$ with its homogeneous counterpart $\bZ_{N,\beta_N=0}^{\omega,h_N}$.

\vspace{0.2cm}
\noindent\textbf{(2) About the regions $R_4$ and $R_5$.} The regions $R_4$ and $R_5$ correspond to sub-diffusive regimes, which are comparable to the homogeneous counterpart of our model, namely, the random walk penalized by its range. This homogeneous model was first introduced in \cite{DV1979} and has been extensively studied in recent years (see \cite{Bolt94,BC21,DFSX20,DFSX20+,DFSX21}). In that setting, $\beta_N\equiv 0$ and $h_N \equiv \hat{h}$ (corresponding to the case $\gamma=+\infty$ and $\zeta = 0$ in our model), i.e., there is no random environment and hence the partition function is deterministic.

The properties of the homogeneous model are now well understood. First,
\begin{equation}\label{eq:homo-partition}
\bZ_{N,\beta_N=0}^{\omega,h_N=\hat{h}}\sim\exp(-c_d(\hat{h})(1+o(1))N^{\frac{d}{d+2}}),\quad\text{as}~N\to\infty, 
\end{equation}
where $c_d(\hat{h})$ is given in \eqref{eq:R5scaling}. Second, under $\bP_N$, the range $\cR_N$ is asymptotically a ball of radius $r_d N^{\frac{1}{d+2}}$ (corresponding to the special case $\zeta=0$ in the region $R_5$) with no holes, where $r_d$ is explicit, and the distribution of the center of the ball has also been characterized. Moreover, a local limit theorem and large deviations for $S_N$ under $\bP_N$ have also been established.

We obtain the same asymptotics \eqref{eq:homo-partition} for our model in the region $R_5$ (see Theorem~\ref{thm:R5a}). It is therefore natural to investigate this region in greater depth and compare it with the homogeneous model. In particular, one may ask the following questions.
\begin{itemize}
\item[(2a)] Is the range $\cR_N$ also asymptotically a ball of radius $r_d N^{\frac{1+\zeta}{d+2}}$ with no holes? We conjecture that this is the case. However, it is not straightforward to verify this by directly adapting the methods in \cite{BC21,DFSX20}, due to the presence of the random environment and the need for a rigorous characterization of the radius and the absence of holes, analogous to Definition \ref{def:xi}.
\item[(2b)] If the answer to (2a) is affirmative, then how can one characterize the center $\mathcal{O}_N$ of the ball? It is known that, in the homogeneous model, the rescaled $\mathcal{O}_N$ admits a random limit (see, e.g., \cite{B24+,S90,DFSX20}). In our model, however, whether the scaling limit of $\mathcal{O}_N$ is random should depend on the strength of the disorder. If the disorder does not influence the second-order term of $\log\bZ_N$, then the behavior of $\mathcal{O}_N$ should be comparable to that in the homogeneous model. Otherwise, the scaling limit of $\mathcal{O}_N$ is expected to be non-random for a given $\omega$, in the sense that $\mathcal{O}_N$ is determined by the random environment so as to maximize the energy gain within the ball.
\item[(2c)] A question related to (2b) is to study the higher-order fluctuations of $\log\bZ_N$, in particular, the fluctuations of $\frac{1}{N^{2\xi-1}}\log\bZ_N+c_d(\hat{h})$, where $-c_d(\hat{h})$ is given in \eqref{eq:R5scaling}.
\end{itemize}
These questions have been addressed in dimension $d=1$ in \cite{B24}. However, they become significantly more challenging in dimensions $d\geq2$, due to the geometric complexity of the range of $S$.

In the region $R_4$, we obtain different asymptotics in Theorem \ref{thm:R4}, identifying a new phase transition that cannot be observed when $h_N$ is fixed. Since it is expected that, in this region, the random walk has enough space to stretch so that $|\cR_N|\approx N$, it becomes more difficult to formulate a tractable problem concerning the geometry of $\cR_N$, as in the region $R_5$. In particular, it seems impossible to define a natural notion of the center of $\cR_N$, and thus questions (2a)–(2b) are no longer well-defined.

We may conjecture that, if the random environment influences the second-order term of $\log\bZ_N$, then it encourages the random walk to visit regions with large energy gain (but still within the scale $N^\xi\ll\sqrt{N}$) and induces a perturbation in the distribution of $S$, making it less symmetric under $\bP_N$. Nevertheless, it remains open how to characterize this asymmetry in a tractable way.

Another direction is to study the fluctuation of $\frac{1}{N^{1-2\xi}}\log\bZ_N+\hat{h}\gamma_d$, where $\gamma_d$ is given in \eqref{eq:range_limit}, in analogy with question (2c) above.

\vspace{0.2cm}
\noindent\textbf{(3) About the region $R_1$.} In the region $R_1$, we identify new phase transitions in \eqref{eq:new_phase}–\eqref{eq:old_phase} and \eqref{eq:new}–\eqref{eq:old2}. Since this is the diffusive regime, it is natural to ask whether there are differences in the finer behavior of the random walk $S$ across these phases. Note that for the one-dimensional case, it was proved in \cite{BTW21} that throughout the entire diffusive regime,
\begin{equation}\label{eq:TV}
\lim\limits_{N\to\infty}d_{\mathbf{TV}}(\bP_N,\bP)=0,\quad\text{in}~\bbP\text{-probability},
\end{equation}	
where $d_{\mathbf{TV}}(\cdot,\cdot)$ denotes the total variation distance. We expect that the same result also holds in higher dimensions. Consequently, any perturbation from the random environment or the range penalty to the distribution of $S$ should not be observable at the diffusive scale $\sqrt{N}$.

In the phase corresponding to \eqref{eq:old_phase} and \eqref{eq:old2}, the random environment dominates the leading-order term of $\log\bZ_N$. However, in view of \eqref{eq:TV}, we expect that it can only induce a mild perturbation in the distribution of $S$. As in the region $R_4$, the range $\cR_N$ is not a ball without holes, and its geometry is not transparent. We therefore conjecture that the effect of the random environment is limited to inducing a mild asymmetry in $S$.

In contrast, in the phase corresponding to \eqref{eq:new_phase} and \eqref{eq:new}, the range penalty dominates the leading-order term of $\log\bZ_N$. Since the range penalty is homogeneous, we expect that the symmetry of $S$ under $\bP_N$ is preserved. However, it may still have a mild confining effect on the range $\cR_N$. Hence, one may investigate the appropriate scaling limit or fluctuations of $\bE[|\cR_N|]-\bE_N[|\cR_N|]$, which should be stochastically positive, in order to quantify this effect.

\subsection{Notations}\label{S:notations}
\begin{itemize}
\item We will sometimes consider $\bZ_{N,\beta_N=0}^{\omega,h_N}$ and $\bZ_{N,\beta_N}^{\omega,h_N=0}$, where the former is the partition function of the homogeneous model, and the latter is the partition function without range penalty studied in \cite{BTW21}. We also use the notation $\bZ_N(A):=\bE[e^{\sum_{x\in\cR_N}(\beta_N\omega_x-h_N)}\ind_A]$.
\item We define $M_N=M_N(S):=\max_{1\leq n\leq N}\|S_n\|$, which represents the maximal distance from the origin reached by the random walk up to time $N$, where $\|\cdot\|$ denotes the Euclidean norm.
\item Let $\bM^{(r)}_k$ denote the $k$-th largest value among the random variables $(\omega_x)_{x\in \bbZ^d}$ with $x\in\Lambda_r$, where $\Lambda_r:=\{x\in\bbZ^d: \|x\|\leq r\}$ is the Euclidean ball with radius $r$.
\item We may also relabel the random environment in the ball $\Lambda_r$ using order statistics, which we denote by $(\bM_i^{(r)},\bY_i^{(r)})_{i=1}^{|\Lambda_r|}$. Here $(\bY_i^{(r)})_{i=1}^{|\Lambda_r|}$ is a random permutation of $\Lambda_r$, where $\bY_i^{(r)}\in\Lambda_r$ denotes the location of the $i$-th largest random variables $\omega_x$ with $x\in\Lambda_r$.
\item When restricted to the ball $\Lambda_r$, the energy gain associated with a set $\Delta\subset\bbZ^d$ is defined by
\begin{equation}\label{eq:bigOmega}
\Omega_r(\Delta):=\sum_{i=1}^{|\Lambda_r|}\bM_i^{(r)}\mathbbm{1}_{\{\bY_i^{(r)}\in \Delta\}}.
\end{equation}
\end{itemize}

\section{Proof for Theorem \ref{thm:R4}}\label{sec:R4}
In the region $R_4$, recall that we have $d\geq3$, $\frac{2}{d}<\zeta<1$ and $\xi=\frac{\zeta}{2}$. Furthermore, if $\alpha\in(\frac{d}{2},d)$, then $\gamma ~> ~\frac{2\alpha-d}{2\alpha}\zeta~+~\frac{d-\alpha}{\alpha}$ and if $\alpha\in(0,\frac{d}{2})$, then $\gamma>\frac{d-\alpha}{\alpha}$. In particular, the critical case $\alpha=\frac{d}{2}$ can be included, where $\frac{d-\alpha}{\alpha}=\frac{d}{2\alpha}$.

We first establish an upper bound for $\frac{1}{N^{1-2\xi}}\log\bZ_N$. By H\"older's inequality, for any $\epsilon>0$,
\begin{equation}\label{eq:Holder}
\begin{split}
\bZ_N\leq
\Big(\bZ_{N, \beta_N=0}^{\omega, (1+\epsilon)h_N}\Big)^{\frac{1}{1+\epsilon}}
\Big(\bZ_{N,\frac{1+\epsilon}{\epsilon}\beta_N}^{\omega, h_N=0}\Big)^{\frac{\epsilon}{1+\epsilon}}.
\end{split}
\end{equation}
Taking logarithm and dividing it by $N^{1-2\xi}$, we have that
\begin{equation}\label{log_holder_ub}
\frac{1}{N^{1-2\xi}}\log\bZ_N\leq\frac{1}{(1+\epsilon)N^{1-2\xi}}\log\bZ_{N,\beta_N=0}^{\omega, (1+\epsilon)h_N}+\frac{\epsilon}{(1+\epsilon)N^{1-2\xi}}\log\bZ_{N, \frac{1+\epsilon}{\epsilon}\beta_N}^{\omega, h_N=0}.
\end{equation}

Similarly, we have the following lower bound. By H\"older's inequality, for any $\epsilon\in (0,1)$,
\begin{align*}
\bZ_{N, \beta_N=0}^{\omega, (1-\epsilon)h_N}&\le 
\Big(\bZ_N\Big)^{1-\epsilon}\Big(\bZ_{N, -\frac{1-\epsilon}{\epsilon}\beta_N}^{\omega, h_N=0}\Big)^\epsilon,
\end{align*}
that is,
\begin{equation}\label{eq:invers_Holder}
\bZ_N\ge \Big(\bZ_{N,\beta_N=0 }^{\omega, (1-\epsilon)h_N}\Big)^{\frac{1}{1-\epsilon}}\Big(\bZ_{N, -\frac{1-\epsilon}{\epsilon}\beta_N}^{\omega, h_N=0}\Big)^{-\frac{\epsilon}{1-\epsilon}}.
\end{equation}	
Taking logarithm and dividing it by $N^{1-2\xi}$, we have that
\begin{equation}\label{log_holder_lb}
\frac{1}{N^{1-2\xi}}\log\bZ_N\geq\frac{1}{(1-\epsilon)N^{1-2\xi}}\log\bZ_{N,\beta_N=0}^{\omega, (1-\epsilon)h_N}-\frac{\epsilon}{(1-\epsilon)N^{1-2\xi}}\log\bZ_{N, -\frac{1-\epsilon}{\epsilon}\beta_N}^{\omega, h_N=0}.
\end{equation}

We now show that the second terms on the right-hand sides of \eqref{log_holder_ub} and \eqref{log_holder_lb} are negligible (Step 1), and that the correct asymptotics are given by the first terms (Step 2).

\vspace{0.2cm}	
\noindent\textbf{Step 1.} We show that for any $\delta\in\bbR$, $\log\bZ_{N, \delta\beta_N}^{\omega, h_N=0}=o(N^{1-2\xi})$ in $\bbP$-probability. Note that we are in the setting of \cite{BTW21}, since $h_N=0$. Hence, many computations in \cite[Section~7]{BTW21} can be directly applied here. We consider three cases separately.
	
\vspace{0.2cm}
\noindent\textbf{Case 1.} For $\frac{d}{2}<\ga<d$ and $\frac{d-\ga}{\ga}<\gamma<\frac{d}{2\ga}$, 
the partition function falls in the region B in \cite{BTW21}. Hence, by \cite[Theorem~2.7]{BTW21}, 
\begin{equation*}
\frac{1}{N^{2\xi'-1}}\log\bZ_{N, \delta\beta_N}^{\omega, h_N=0}~\text{converges in distribution to a non-trivial limit with}~\xi'=\frac{\ga(1-\gamma)}{2\ga-d}.
\end{equation*}
Note that in the region $R_4$, we have that $\xi=\frac{\zeta}{2}$. Therefore, since $\gamma>\frac{2\alpha-d}{2\alpha}\zeta+\frac{d-\alpha}{\alpha}$,
\begin{equation*}
\begin{split}
1-2\xi-(2\xi'-1)=&1-\zeta-\Big(\frac{2\ga(1-\gamma)}{2\ga-d}-1\Big)=\frac{2\ga\gamma-(2\ga-d)\zeta+2(\ga-d)}{2\ga-d}\\
>&\frac{1}{2\ga-d}\Big(2\ga\Big(\frac{2\ga-d}{2\ga}\zeta+\frac{d-\ga}{\ga}\Big)-(2\ga-d)\zeta+2(\ga-d)\Big)=0.
\end{split} 
\end{equation*}
Hence, $\frac{1}{N^{1-2\xi}}\log\bZ_{N, \delta\beta_N}^{\omega, h_N=0}\longrightarrow0$ in $\bbP$-probability as $N\to\infty$.

\vspace{0.2cm}
\noindent\textbf{Case 2.} For $\frac{d}{2}\leq\ga<d$ and $\gamma\geq\frac{d}{2\ga}$, 
the partition function falls in the region C in \cite{BTW21} (the boundary between the regions B and C in \cite{BTW21} is included). We adapt the approaches in \cite[Section 7]{BTW21}, where the correct scaling for $\log\bZ_{N, \delta\beta_N}^{\omega, h_N=0}$ is determined, except for $\alpha\in\{2,\frac{d}{2}\}$. Our treatment here is simpler since we only need some upper bounds.

First, we truncate the random environment $\omega$ by $k_N=\beta_N^{-1}\sqrt{\log\log N}$ and denote $\tilde{\omega}_x:=\omega_x\ind_{\{|\omega_x|\leq k_N\}}$. Our truncation is slightly different from that in \cite{BTW21} since we include the cases $\alpha\in\{2,\frac{d}{2}\}$. We show that $|\bZ_{N,\delta\beta_N}^{\omega,h_N=0}-\bZ_{N,\delta\beta_N}^{\tilde \omega,h_N=0}|\overset{N\to\infty}{\longrightarrow}0$ in $\bbP$-probability so that we can then work on $\bZ_{N,\delta\beta_N}^{\tilde \omega,h_N=0}$. 

With $A_N~=~\log\log\log N$, we follow the ideas in \cite[Lemma 7.1, Lemma 7.4]{BTW21} to show that
\begin{itemize}
\item[(1)] $\bZ_{N,\delta\beta_N}^{\omega,h_N=0}(M_N\geq A_N\sqrt{N A_N})\overset{N\to\infty}{\longrightarrow}0$ in $\bbP$-probability.
\item[(2)] $\bZ_{N,\delta\beta_N}^{\tilde{\omega},h_N=0}(M_N\geq A_N\sqrt{N A_N})\overset{N\to\infty}{\longrightarrow}0$ in $\bbP$-probability.
\item[(3)] $|\bZ_{N,\delta\beta_N}^{\omega,h_N=0}(M_N\leq A_N\sqrt{N A_N})-\bZ_{N,\delta\beta_N}^{\tilde{\omega},h_N=0}(M_N\leq A_N\sqrt{N A_N})|\overset{N\to\infty}{\longrightarrow}0$ in $\bbP$-probability.
\end{itemize}
\noindent This completes the truncation procedure.

For (1), we in fact have
\begin{equation*}
\bbP\Big(\bZ_{N,\delta\beta_N}^{\omega,h_N=0}\big(M_N\geq A_N\sqrt{N A_N}\big)>e^{-cA_N^3}\Big)\leq cA_N^{-\nu},
\end{equation*}
where we use that $\delta\beta_N N^{\frac{d}{2\alpha}} \leq C$. This estimate is identical to \cite[Proposition~6.1]{BTW21}, with $N^\xi$ replaced by $\sqrt{N A_N}$, and we therefore omit the proof (see also equation~(7.4) in \cite{BTW21}).

\smallskip

To prove (2), denote $\lambda_N=\log\bbE[\exp(\delta\beta_N\tilde{\omega}_0)]$. By Cauchy-Schwarz inequality,
\begin{equation*}
\begin{split}
\bbE\Big[\bZ_{N,\delta\beta_N}^{\tilde{\omega},h_N=0}\big(M_N\geq A_N\sqrt{N A_N}\big)\Big]&=\bE\Big[e^{\lambda_N|\cR_N|}\ind_{\{M_N\geq A_N\sqrt{N A_N}\}}\Big]\leq\bE\Big[e^{2\lambda_N|\cR_N|}\Big]^{\frac12}\bP\big(M_N\geq A_N\sqrt{N A_N}\big)^{\frac12}.
\end{split}
\end{equation*}
By \cite[Lemma C.1]{BTW21}, we have that $|\lambda_N|\leq e^{\delta\beta_N k_N}\beta_N^{\alpha\wedge 2}$ (see equation (7.9) in \cite{BTW21}). Recall that $d\geq3$ in the region $R_4$, and we have that
\begin{equation}\label{eq:lambda_R}
|\lambda_N||\cR_N|\leq e^{c\sqrt{\log\log N}}N^{1-\frac{d}{2\alpha}(\alpha\wedge2)}\overset{N\to\infty}{\longrightarrow}0
\end{equation}
by $d>\alpha$ and $d\geq3$. The estimate then follows by the classical large deviation principle of $M_N$.

Finally, we prove (3). By a union bound and $\gamma\geq\frac{d}{2\alpha}$, we have that,
\begin{equation*}
\begin{split}
&\bbP\Big(\bZ_{N,\delta_N\beta_N}^{\omega,h_N=0}\big(M_N\leq A_N\sqrt{N A_N}\big)\neq\bZ_{N,\delta_N\beta_N}^{\tilde{\omega},h_N=0}\big(M_N\leq A_N\sqrt{N A_N}\big)\Big)\\
=&\sum\limits_{\|x\|\leq A_N\sqrt{N A_N}}\bbP(|\omega|_x\geq k_N)\leq C \big(A_N\sqrt{N A_N}\big)^{d}\big(N^{\gamma}\sqrt{\log\log N}\big)^{-\alpha}\overset{N\to\infty}{\longrightarrow}0,
\end{split}
\end{equation*} where $C>0$.

Then we turn to $\bZ_{N,\delta\beta_N}^{\tilde{\omega},h_N=0}$. By a polynomial chaos expansion (see also equation (7.11) in \cite{BTW21}), we have that, with $\epsilon_x:=\exp(\delta\beta_N\tilde{\omega}_x-\lambda_N)-1$,
\begin{equation*}
\begin{split}
\bZ_{N,\delta\beta_N}^{\tilde{\omega},h_N=0}&=\bE\Big[e^{\lambda_N|\cR_N|}\Big]+\sum\limits_{k\geq1}\sum\limits_{(x_1,\cdots,x_k)\in(\bbZ^d)^k\atop x_i\neq x_j, i\neq j}\Big(\prod\limits_{i=1}^k \epsilon_{x_i}\Big)\bE\Big[e^{\lambda_N|\cR_N|}\ind_{\{(x_1,\cdots,x_k)\in\cR_N\}}\Big]=:\bE\Big[e^{\lambda_N|\cR_N|}\Big]+\bY_N.
\end{split}
\end{equation*}

For $d\geq3$, it has been shown by \eqref{eq:lambda_R} that $\lim_{N\to\infty}\bE[e^{\lambda_N|\cR_N|}]=1$, and by equation (7.14) in \cite{BTW21} and the choice of $k_N$, $\bbE[(\bY_N)^2]\leq Ce^{c\beta_N k_N}\beta_N^{\alpha\wedge2} N\overset{N\to\infty}{\longrightarrow}0$ since $d>\alpha$ and $d\geq3$. Hence, $\frac{1}{N^{1-2\xi}}\log\bZ_{N,\delta\beta_N}^{\tilde{\omega},h_N=0}\overset{N\to\infty}{\longrightarrow}0$ in $\bbP$-probability, simply due to $1-2\xi>0$.

\vspace{0.2cm}
\noindent\textbf{Case 3.} For $0<\ga<\frac{d}{2}$ and $\gamma>\frac{d-\ga}{\ga}>\frac{d}{2}$, then all the computations for \textbf{Case 2} carries over here without any changes, since the partition function still falls in the region C in \cite{BTW21}. 

\vspace{0.2cm}
\noindent\textbf{Step 2.} We show the convergence in Theorem \ref{thm:R4} by showing that for any $\vartheta>0$,
$$
\lim_{N\to +\infty}\frac1{N^{1-2\xi}}\log\bZ_{N,\beta_N=0}^{\omega, \theta h_N}=-\theta\hat{h}\gamma_d.
$$
The result then follows by taking $N\to\infty$ and then taking $\epsilon\to0$ in \eqref{log_holder_ub} and \eqref{log_holder_lb}, where $\vartheta$ is $1+\epsilon$ and $1-\epsilon$ respectively.

For the lower bound, since $f(x)=e^{-x}$ is convex, by Jensen's inequality, we have that
\begin{equation}\label{eq:R4_lower}
\bZ_{N,\beta_N=0}^{\omega, \theta h_N}=\bE\Big[e^{-\theta h_N|\cR_N|}\Big]\ge e^{-\theta h_N\bE[|\cR_N|]}.
\end{equation}
The lower bound then follows by $\xi=\frac{\zeta}{2}$ and \eqref{eq:range_limit}.
	
For the upper bound, we write
\begin{align*}
\bZ_{N,\beta_N=0}^{\omega, \theta h_N}	&=e^{-\theta h_N\bE[|\cR_N|]} \bE\Big[e^{-\theta h_N(|\cR_N|-\bE[|\cR_N|])}\Big]
\end{align*}
and then we only need to show that $\bE[e^{-\theta h_N(|\cR_N|-\bE[|\cR_N|])}]$ is negligible. Recall that in the region $R_4$, $d\geq 3$. By \cite[Theorem 1]{BBH01}, we have the following large deviation principle for $|\cR_N|$: for any $\delta>0$,
\begin{equation}\label{LDP_range}
\lim_{N\to\infty}\frac{1}{N^{1-\frac{2}{d}}}\log\bP\Big(|\cR_N|-\bE[|\cR_N|]\le -\delta N\Big)=-\kappa,
\end{equation}
where $\kappa=\kappa(d,\delta)>0$ is explicit. Set 
\begin{equation}\label{eq:goodevent}
\cG_{N, \delta}:=\Big\{|\cR_N|-\bE[|\cR_N|]\le -\delta N\Big\}
=\Big\{e^{-\theta h_N(|\cR_N|-\bE[|\cR_N|])}\ge e^{\delta\theta\hat h N^{1-\zeta}}
\Big\}.
\end{equation}
Note that $e^{-\theta h_N(|\cR_N|-\bE[|\cR_N|])}\le e^{\theta\hat h N^{1-\zeta}}$. Thus, for any $\delta>0$, there exists some $\kappa'>0$, such that
\begin{align*}
\bE\Big[e^{-\theta h_N(|\cR_N|-\bE[|\cR_N|])}\Big]
&=\bE\Big[e^{-\theta h_N(|\cR_N|-\bE[|\cR_N|])}\ind_{\cG_{N, \delta}^c} 
\Big] + \bE\Big[e^{-\theta h_N(|\cR_N|-\bE[|\cR_N|])}\ind_{\cG_{N, \delta}} 
\Big]\\
&\le e^{\delta\theta\hat h N^{1-\zeta}}+e^{\theta\hat h N^{1-\zeta}}e^{-\kappa' N^{1-\frac{2}{d}}}
\le  e^{\delta \hat h N^{1-\zeta}}+e^{-\kappa' N^{1-\frac{2}{d}}},
\end{align*}
for $N$ is large enough, since $1-\frac2d>1-\zeta$ in the region $R_4$ by $\zeta>\frac{2}{d}$. Hence,
\begin{equation*}
\limsup\limits_{\delta\to0}\limsup\limits_{N\to\infty}\frac{1}{N^{1-2\xi}}\bE\Big[e^{-\theta h_N(|\cR_N|-\bE[|\cR_N|])}\Big]\leq\limsup\limits_{\delta\to0}\limsup\limits_{N\to\infty}\frac{1}{N^{1-\zeta}}\log\Big(e^{\delta \hat h N^{1-\zeta}}+e^{-\kappa' N^{1-\frac{2}{d}}}\Big)=0,
\end{equation*}
and the upper bound follows.

\vspace{0.2cm}
Finally, we show that $\bP_N\big(\big|\frac{|\cR_N|}{N}-\gamma_d\big|<\delta\big)\overset{N\to\infty}{\longrightarrow}1$ in $\bbP$-probability. For any $\delta>0$,
\begin{equation}\label{R4_prob}
\bZ_N=\bZ_N\Big(\big||\cR_N|-\gamma_d N\big|<\delta N\Big)+\bZ_N\Big(|\cR_N|-\gamma_d N\leq-\delta N\Big)+\bZ_N\Big(|\cR_N|-\gamma_d N\geq\delta N\Big).
\end{equation}
Note that $\bZ_N(|\cR_N|-\gamma_d N\geq\delta N)\leq e^{-(\gamma_d+\delta)\hat{h}N^{1-\zeta}}\bZ_{N,\beta_N}^{\omega,h_N=0}$ and by Cauchy-Schwarz inequality,
\begin{equation*}
\Big(\bZ_N\Big(|\cR_N|-\gamma_d N\leq-\delta N\Big)\Big)^2\leq \bZ_{N,\beta_N=0}^{\omega, 2h_N}\Big(|\cR_N|-\gamma_d N\leq-\delta N\Big)\bZ_{N,2\beta_N}^{\omega,h_N=0}\leq e^{-\kappa' N^{1-\frac{2}{d}}}\bZ_{N,2\beta_N}^{\omega,h_N=0}.
\end{equation*}
We have already shown that for any $\epsilon\in(0,\delta)$, with $\bbP$-probability arbitrarily close to $1$, $\log\bZ_{N, \beta_N}^{\omega,h_N=0}=o(N^{1-\zeta})$ (see Step 1) and $\bZ_N>\exp(-(\gamma_d+\epsilon)\hat{h}N^{1-\zeta})$ (by \eqref{eq:R4_lower}). Hence, dividing the both sides of \eqref{R4_prob} by $\bZ_N$, we show that by $1-\frac{2}{d}>1-\zeta$, 
\begin{equation*}
\bP_N\Big(\Big|\frac{|\cR_N|}{N}-\gamma_d\Big|<\delta\Big)\geq 1-e^{-(\delta-\epsilon)\hat{h}N^{1-\zeta}}-e^{-\frac{1}{2}\kappa' N^{1-\frac{2}{d}}}\overset{N\to\infty}{\longrightarrow}1.
\end{equation*}

\section{Proof for Theorem \ref{thm:R6}}\label{sec:R6}
In the region $R_6$, recall that $\zeta<-1$ and $\gamma>\zeta+\frac{d-\alpha}{\alpha}$. We write 
\begin{equation}\label{eq:R6split}
\bZ_N=
\bZ_N(|\cR_N|=2)+\bZ_N(|\cR_N|\ge 3).
\end{equation}
Note that the critical case $\alpha=\frac{d}{2}$ can also be included.

For the first term,
\begin{equation*}
\bZ_N(|\cR_N|=2)= \exp(-2 h_N)
\sum_{k=1}^d\sum\limits_{j=\pm1}e^{\beta_N(\omega_0+\omega_{je_k})}\bP\big(\cR_N=\{0,je_k\}\big),
\end{equation*}
where $e_k$ is a $d$-dimensional vector with the $k$-th element being 1 and other elements being 0.

Note that $\bP(\cR_N=\{0,je_k\})=(1/2d)^{N}$ for all $k,j$ above. By $\zeta<-1$, we get that 
\begin{equation}\label{eq:R6_main}
\lim_{N\to\infty} N^{\zeta}\log \bZ_N(|\cR_N|=2)=-2\hat h,\quad\text{in}~\bbP\text{-probability},
\end{equation}
since by $\gamma>\zeta+\frac{d-\alpha}{\alpha}$, there exists some $\epsilon>0$, such that by a union bound and \eqref{def:omega},
\begin{equation*}
\bbP\Bigg(\beta_N\Big(|\omega_0|+\sum_{k=1}^d(|\omega_{e_k}|+|\omega_{-e_k}|)\Big)>N^{-\zeta-\epsilon}\Bigg)<C_d N^{-\alpha(\gamma-\zeta-\epsilon)}\overset{N\to\infty}{\longrightarrow}0.
\end{equation*}

Then we show that the second term in \eqref{eq:R6split} is negligible. By \eqref{eq:R6_main}, it suffices to show that
\begin{equation}\label{eq:>3}
\lim_{N\to +\infty} \bbP\Big(\bZ_N(|\cR_N|\ge 3) > e^{-\frac{5}{2}\hat h N^{-\zeta}}\Big)=0.
\end{equation}
Recall $\bM_{1}^{(A_N)}$ and $M_N$ from Section \ref{S:notations}, where  $\bM_{1}^{(A_N)}$ denotes the largest value among the random variables $(\omega_x)_{x \in \bbZ^d}$ with $|x|\leq A_N$ and $M_N:=\max_{1\leq n\leq N}\|S_n\|$. We choose $A_N=N^{1-\frac{\alpha}{d}}\overset{N\to\infty}{\longrightarrow}\infty$ and split the partition function as follows
\begin{align}\label{eq:R6split2}
\bZ_N(|\cR_N|\ge 3) \le \bZ_N(|\cR_N|\ge 3, M_N\le A_N)+ \bZ_N(M_N>A_N).
\end{align}

For the first term above, 
\begin{align*}
\bZ_N(|\cR_N|\ge 3, M_N\le A_N) = \sum_{k=3}^N \bZ_N(|\cR_N|=k, M_N\le A_N)\le 
\sum_{k=3}^N e^{-k\hat h N^{-\zeta}+k\hat \beta  N^{-\gamma}\bM_{1}^{(A_N)}}.
\end{align*}
We have that by a large deviation estimate of heavy-tailed random variables (see \cite[Lemma 5.1]{BT19}),
\begin{equation}\label{eq:large_dev}
\bbP\Big( \bM_{1}^{(A_N)} > \frac{\hat h}{6\hat \beta}  N^{\gamma-\zeta}\Big) 
\le \Big( c_{\hat{h},\hat{\beta}} N^{\gamma-\zeta} A_N^{-\frac{d}{\alpha}}\Big)^{-\alpha}=cN^{-\ga(\gamma-\zeta-\frac{d-\ga}{\ga})}\overset{N\to\infty}{\longrightarrow}0.
\end{equation}
Therefore, with $\bbP$-probability close to $1$,
\begin{equation}\label{R6:<A_N}
\bZ_N(|\cR_N|\ge 3, M_N\le A_N)\leq \sum_{k=3}^N e^{-k \frac56 \hat h N^{-\zeta} } \le e^{-\frac52 \hat h N^{-\zeta}}.
\end{equation}

We then turn to $\bZ_N(M_N>A_N)$ in \eqref{eq:R6split2}. We have that 
\begin{equation}\label{eq:Z_Nalpha>2M>A}
\begin{split}
\bbP\Big(\bZ_N(M_N> A_N) > e^{-\frac{5}{2}h_N}\Big)
&=
\bbP\Big(\bE\big[e^{\beta_N \sum_{x\in\cR_N}\omega_x-h_N|\cR_N|+\frac52 h_N}\mathbbm{1}_{\{M_N> A_N\}}\big] > 1\Big) \\
&\le \bbP\Big(\bZ_{N,\beta_N}^{\omega,\frac12 h_N}(M_N>A_N) > 1\Big).
\end{split}
\end{equation}
We show that the last probability above converges to $0$ as $N\to\infty$. By the crucial Lemma \ref{lem:g>zda} below, we will show in a while that
\begin{equation}\label{eq:R6}
\lim\limits_{N\to\infty}\bbP\Big(\bZ_{N,\beta_N}^{\omega,\frac12 h_N}(M_N>A_N)\ge \bZ_{N,\beta_N=0}^{\omega,\frac14 h_N}(M_N> A_N)\Big)=0.
\end{equation}
Then we can conclude our claim for \eqref{eq:Z_Nalpha>2M>A} by a trivial bound, $\bZ_{N,\beta_N=0}^{\omega,\frac14 h_N}(M_N> A_N)\le e^{-\frac{1}{4}\hat{h}N^{-\zeta+1-\frac\alpha{d}}}\ll1$.

Now \eqref{eq:>3} follows by \eqref{eq:R6split2}, \eqref{R6:<A_N}, \eqref{eq:Z_Nalpha>2M>A} and \eqref{eq:R6}. Furthermore, by \eqref{eq:R6split}
\begin{equation*}
1=\bP_N(|\cR_N|=2)+\frac{\bZ_N(|\cR_N|\geq3)}{\bZ_N},
\end{equation*}
where the second term is bounded by $\exp(-\frac14\hat{h}N^{-\zeta})$ with $\bbP$-probability close to $1$ as $N\to\infty$. Hence, $\bP_N(|\cR_N|=2)\to1$ as $N\to\infty$ in $\bbP$-probability.

It remains to prove \eqref{eq:R6}. To this end, we require the crucial Lemma~\ref{lem:g>zda}, which follows from the analysis of last passage percolation with a prescribed range (see Section \ref{sec:appA}) and allows us to control the energy gain within a given box and for a given range. We first state and prove the lemma, and then use it to establish \eqref{eq:R6}.

\begin{lemma}\label{lem:g>zda}
Recall $\Omega_r(\Delta)$ from Section \ref{S:notations}. If $\gamma>\zeta+\frac{d-\alpha}{\alpha}$, $A_N\le N$, 
then for any $\epsilon\in (0,1)$ and $p>1$, there exists a constant $C=C_{d,p,\epsilon}>0$, such that
\begin{multline}\label{sumprob}
\sum_{k=0}^{\log_p(\frac{N}{A_N})}\sum_{j=0}^{B_{N,k}}\bbP\Bigg(\max\limits_{S\in \cA_{j,k} }\Omega_{p^{k+1}A_N}(\cR_N(S))>\epsilon^2p^{k+j}\frac{h_N}{\beta_N}A_N\Bigg)
\le C(\log N) (N^{-\zeta+\gamma-\frac{d-\alpha}\alpha})^{-\frac{\alpha d}{\alpha+d}},
\end{multline}
where
\begin{equation}\label{eq:B_Nk}
B_{N,k}:=\log_p\Big(\frac{|\Lambda_{p^k A_N}|}{A_N}\wedge \frac{N}{p^k A_N}\Big)
\end{equation}
and $\cA_{j,k}=\cA_{j,k}(p,N) := \cM_p(p^kA_N, p^j)$ (see \eqref{eq:Mrs}), that is,
\begin{equation}\label{def:Ajk}
\cA_{j,k} =\Big\{S\colon \,  M_N(S)\in [p^kA_N, p^{k+1}A_N],\, |\cR_N(S)|\in [p^{k}p^{j}A_N ,p^{k+1}p^{j+1}A_N]\Big\}.
\end{equation}
\end{lemma}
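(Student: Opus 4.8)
The plan is to reduce the bound \eqref{sumprob} to a single tail estimate for the maximal energy of a last-passage-percolation-type functional restricted to paths with prescribed range size, and then to sum the resulting geometric series. First I would fix the scales: for a dyadic (base-$p$) scale $p^kA_N \le N$, the walk confined to $M_N \in [p^kA_N, p^{k+1}A_N]$ lives in the Euclidean ball $\Lambda_{p^{k+1}A_N}$, which contains $\asymp (p^{k+1}A_N)^d$ sites, and on the event $\cA_{j,k}$ its range has size $\asymp p^{k+j}A_N$. The quantity $\Omega_{p^{k+1}A_N}(\cR_N(S))$ is, by \eqref{eq:bigOmega}, exactly the sum of the largest $|\cR_N(S)|$-many environments seen along $S$ in that ball, so $\max_{S\in\cA_{j,k}}\Omega_{p^{k+1}A_N}(\cR_N(S))$ is stochastically dominated by the maximal passage weight of a path collecting $\le p^{k+1}p^{j+1}A_N$ points out of the $\asymp (p^{k+1}A_N)^d$ i.i.d.\ heavy-tailed points in the ball. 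This is precisely the Hammersley/last-passage quantity whose tail is controlled by the estimates referenced in Section \ref{sec:appA} (and in \cite{BT21}); I would invoke that estimate to get, for a ball of volume $V$ and a budget of $m$ collected points,
\[
\bbP\Big(\text{max passage weight} > t\Big) \le C\,\Big(t\, V^{-1/\alpha} m^{-1/d}\Big)^{-\frac{\alpha d}{\alpha+d}},
\]
valid once $t$ exceeds the typical scale; here the exponent $\frac{\alpha d}{\alpha+d}$ is the LPP exponent produced by optimizing the trade-off between the number of large environments one can hope to hit (governed by $V$ and the tail index $\alpha$) and the entropy/combinatorial cost of a path of length $\le N$ visiting $m$ prescribed sites (governed by $d$).

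Next I would plug in the relevant parameters: $V \asymp (p^{k+1}A_N)^d$, $m \asymp p^{k+1+j+1}A_N$, and the threshold $t = \epsilon^2 p^{k+j}\tfrac{h_N}{\beta_N}A_N$. Since $h_N/\beta_N = (\hat h/\hat\beta) N^{\gamma-\zeta}$, the combination $t\,V^{-1/\alpha}m^{-1/d}$ becomes, up to constants depending on $p,\epsilon,d$,
\[
p^{k+j}\,N^{\gamma-\zeta}A_N \cdot (p^kA_N)^{-d/\alpha}\cdot (p^{k+j}A_N)^{-1/d}
\;=\; p^{(k+j)(1-\frac1d)}\,p^{-kd/\alpha}\,N^{\gamma-\zeta}\,A_N^{1-\frac d\alpha-\frac1d}.
\]
The key arithmetic point is that $A_N^{1-d/\alpha} = A_N^{-(d-\alpha)/\alpha}$ and $A_N \le N$, so $N^{\gamma-\zeta}A_N^{-(d-\alpha)/\alpha} \ge N^{\gamma-\zeta-(d-\alpha)/\alpha}$, which is $\to\infty$ precisely because $\gamma > \zeta + \frac{d-\alpha}{\alpha}$ — this is exactly where the hypothesis is used, and it guarantees the threshold $t$ is in the large-deviation regime of the LPP tail estimate. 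Raising to the power $-\frac{\alpha d}{\alpha+d}$, each summand is bounded by $C\,(N^{-\zeta+\gamma-\frac{d-\alpha}\alpha})^{-\frac{\alpha d}{\alpha+d}}$ times a factor $p^{-c(k,j)}$ where the exponent $c(k,j)$ is a strictly positive linear form in $k$ and $j$ (one checks $(1-\frac1d)\cdot\frac{\alpha d}{\alpha+d} > 0$ and $\frac{d}{\alpha}\cdot\frac{\alpha d}{\alpha+d} - (1-\frac1d)\frac{\alpha d}{\alpha+d} = \frac{d}{\alpha+d}(d - \alpha + \tfrac{\alpha}{d}) > 0$ using $\alpha < d$). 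Hence the double sum over $j\ge0$ converges geometrically, contributing only an $O(1)$ factor, while the sum over $0 \le k \le \log_p(N/A_N) = O(\log N)$ contributes the stated $O(\log N)$, yielding \eqref{sumprob}.

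The main obstacle I anticipate is the passage from the polymer quantity $\Omega_{p^{k+1}A_N}(\cR_N(S))$, maximized over the genuinely constrained class $\cA_{j,k}$ of nearest-neighbour walks of length $N$, to a clean LPP functional over paths collecting a fixed budget of points: one must verify that restricting a length-$N$ walk to have range in $[p^{k+j}A_N, p^{k+1+j+1}A_N]$ and displacement in $[p^kA_N,p^{k+1}A_N]$ really does let the LPP bound absorb the entropy (number of such walks) without spoiling the exponent — this is where the set-up of Section \ref{sec:appA} and the definition \eqref{eq:Mrs} of $\cM_p(r,s)$ do the real work, and care is needed that the combinatorial factor counting self-avoiding-like excursions of length $N$ through $m$ sites is sub-exponential in the relevant scale, so that it is harmless after taking logarithms. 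A secondary technical point is uniformity of the LPP tail estimate in the three regimes $V \ll m^{d}$, $V \asymp m^d$, $V \gg m^d$ (equivalently in $j$ small, intermediate, large relative to $B_{N,k}$), but the definition \eqref{eq:B_Nk} of $B_{N,k}$ as $\log_p(|\Lambda_{p^kA_N}|/A_N \wedge N/(p^kA_N))$ is precisely engineered so that both $m \le V$ and $m \le N$ hold on the whole summation range, which is all the LPP estimate requires.
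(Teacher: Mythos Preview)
Your overall strategy --- reduce each summand to the LPP-type tail estimate of Proposition~\ref{lem:Omega} (this is precisely the content of Section~\ref{sec:appA}) and then sum over the dyadic scales --- is exactly the paper's approach. But two concrete errors prevent the argument from going through as you wrote it.

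The LPP tail bound you quote has the wrong typical scale. Proposition~\ref{lem:Omega} reads
\[
\bbP\Big(\max_{S\in\cM(r,s)}\Omega_r(\cR_N(S)) > r^{d/\alpha} s\, t\Big) \le C\,t^{-\frac{\alpha d}{\alpha+d}},
\]
so in your $(V,m)$ notation (with $V\asymp r^d$ the volume and $m\asymp rs$ the range size) the correct typical scale is $r^{d/\alpha}s = V^{1/\alpha-1/d}m$, \emph{not} $V^{1/\alpha}m^{1/d}$. With $r=p^kA_N$, $s=p^j$ and threshold $T=\epsilon^2p^{k+j}\tfrac{h_N}{\beta_N}A_N$ one obtains
\[
t \;=\; T\big/(r^{d/\alpha}s) \;=\; \epsilon^2\,\tfrac{h_N}{\beta_N}\,A_N^{\,1-d/\alpha}\,p^{k(1-d/\alpha)},
\]
which is \emph{independent of $j$}: the factor $p^j$ in the threshold cancels exactly against $s=p^j$.

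This forces the summation to run differently from your sketch. Since each bound is $j$-free, the inner $j$-sum just contributes the number of terms $B_{N,k}\le C_d\log N$ --- this is where the $\log N$ in \eqref{sumprob} actually comes from. The $k$-sum is then a \emph{growing} geometric series (ratio $p^{\,d(d-\alpha)/(\alpha+d)}>1$), dominated by its last term $k=\log_p(N/A_N)$; there $A_Np^k=N$ and one recovers $(\tfrac{h_N}{\beta_N}N^{1-d/\alpha})^{-\alpha d/(\alpha+d)}=(N^{-\zeta+\gamma-(d-\alpha)/\alpha})^{-\alpha d/(\alpha+d)}$. Your claim that the summand decays geometrically in $k$ is a sign slip: the positive quantity $\big[\tfrac{d}{\alpha}-(1-\tfrac1d)\big]\tfrac{\alpha d}{\alpha+d}$ you computed is the exponent of $p^k$ \emph{in the bound itself}, not in your $c(k,j)$, so the bound grows in $k$ rather than decays --- and with your (incorrect) LPP form the summand at the largest $k$ actually exceeds the claimed main term by a factor $N^{\alpha/(\alpha+d)}$.
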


\begin{proof}
By taking $r=p^k A_N$ and $s=p^j$ in Proposition \ref{lem:Omega}, the probability in \eqref{sumprob} is smaller than 
\begin{equation}
\sum_{k=0}^{\log_p(\frac{N}{A_N})}\sum_{j=0}^{B_{N,k}}
C\Big(\epsilon^2 \frac{h_N}{\beta_N}A_N^{1-\frac{d}{\alpha}} p^{(1-\frac{d}{\alpha})k}\Big)^{-\frac{\alpha d}{\alpha+d}}\le 
C\log N \Big(\epsilon^2 \frac{h_N}{\beta_N}N^{1-\frac{d}{\alpha}}\Big)^{-\frac{\alpha d}{\alpha+d}},
\end{equation}
where we have used that  $B_{N,k}\leq C_d\log N$ and 
\begin{equation*}
\sum\limits_{k=0}^{\log_p(\frac{N}{A_N})}p^{\frac{d(d-\alpha)}{\alpha+d}k}\leq C_p\Big(\frac{N}{A_N}\Big)^{\frac{d(d-\alpha)}{\alpha+d}}.
\end{equation*}
\end{proof}
\begin{proof}[Proof for \eqref{eq:R6}] We first decompose $\bZ_{N,\beta_N}^{\omega,\frac12 h_N}(M_N>A_N)$ and $\bZ_{N,\beta_N=0}^{\omega,\frac14 h_N}(M_N>A_N)$ by
\begin{align*}
\bZ_{N,\beta_N}^{\omega,\frac12 h_N}(M_N>A_N)&=\sum\limits_{k=0}^{\log_p(\frac{N}{A_N})}\sum\limits_{j=0}^{B_{N,k}}\bZ_{N,\beta_N}^{\omega,\frac12 h_N}(\cA_{j,k}),\\
\bZ_{N,\beta_N=0}^{\omega,\frac14 h_N}(M_N>A_N)&=\sum\limits_{k=0}^{\log_p(\frac{N}{A_N})}\sum\limits_{j=0}^{B_{N,k}}\bZ_{N,\beta_N=0}^{\omega,\frac14 h_N}(\cA_{j,k}),
\end{align*}
for some $p>1$, where $B_{N,k}$ is defined in \eqref{eq:B_Nk}.

Then by a union bound and \eqref{sumprob}, we obtain that the probability in \eqref{eq:R6} is bounded from above by
\begin{equation}\label{eq:last_step}
\begin{split}
&\sum\limits_{k=0}^{\log_p(\frac{N}{A_N})}\sum\limits_{j=0}^{B_{N,k}}\bbP\Bigg(\bZ_{N,\beta_N}^{\omega,\frac12 h_N}(\cA_{j,k})\geq \bZ_{N,\beta_N=0}^{\omega,\frac14 h_N}(\cA_{j,k})\Bigg)\\
\leq&\sum\limits_{k=0}^{\log_p(\frac{N}{A_N})}\sum\limits_{j=0}^{B_{N,k}}\bbP\Bigg(-\frac12 p^{k+j}h_NA_N+\beta_N\max\limits_{S\in\cA_{j,k} }\Omega_{p^{k+1}A_N}(\cR_N(S))>-\frac14 p^{k+j+2}h_NA_N\Bigg)\\
\leq&\sum\limits_{k=0}^{\log_p(\frac{N}{A_N})}\sum\limits_{j=0}^{B_{N,k}}\bbP\Bigg(\max\limits_{S\in \cA_{j,k} }\Omega_{p^{k+1}A_N}(\cR_N(S))>\frac14(2-p^2) p^{k+j}\frac{h_N}{\beta_N}A_N\Bigg)\\
\leq&C(\log N) (N^{-\zeta+\gamma-\frac{d-\alpha}\alpha})^{-\frac{\alpha d}{\alpha+d}}\to0,\quad\text{as}~N\to\infty,
\end{split}
\end{equation}
where $\epsilon=\frac12\sqrt{2-p^2}$ when $1<p<\sqrt{2}$ and we have used that $\gamma>\zeta+\frac{d-\alpha}{\alpha}$ in the region $R_6$.

\end{proof}

\section{Proof for Theorem \ref{thm:R5a}}\label{sec:R5}
Let us begin this section by observing that, using the Donsker-Varadhan argument \cite{DV1979} (although the authors only treated the case $h_N=\hat{h}$), for $\zeta<\frac2d$ (i.e. $\xi\in(0,\frac1d)$), we obtain that
\begin{equation}\label{eq:varadhan}
\bZ_{N,\beta_N=0}^{\omega,h_N}=\exp\Big(-c_d(\hat h) N^\frac{d-2\zeta}{2+d}(1+o(1)\Big).
\end{equation}

Recall that $\frac{d-2\zeta}{2+d}=1-2\xi$ in the region $R_5$. This yields the following convergence for the homogeneous partition function
$$
\lim_{N\to\infty}\frac{1}{N^{1-2\xi}}\log\bZ_{N,\beta_N=0}^{\omega,h_N}=c_d(\hat{h}).
$$
Our strategy is therefore to show that $\bZ_N$ and $\bZ_{N,\beta_N=0}^{\omega,h_N}$ are comparable. Note that the critical case $\alpha=\frac{d}{2}$ can be included in cases (1) and (3).

\begin{proof}[Proof for the case (1) of Theorem \ref{thm:R5a}]
In this case, we have that $\gamma>\zeta+\frac{d-\alpha}{\alpha}$. The proof is similar to that for Theorem \ref{thm:R6}. Let $A_N=N^{1-\frac{\alpha}{d}}$. We write
\begin{equation}\label{splitZR5}
\bZ_N=\bZ_N(M_N\le A_N)+\bZ_N(M_N> A_N).
\end{equation}

Recall $\bM^{(A_N)}_1$ and $M_N$ from Section \ref{S:notations}. We also use $\mathrm{m}^{(A_N)}_1$ to denote the smallest value among the random variables $\omega_x$ with $\|x\|\leq A_N$. Note that $\bbP(\bM^{(A_N)}_1\leq0)=\bbP(\omega\leq0)^{|\Lambda_{A_N}|}\overset{N\to\infty}{\longrightarrow}0$, and similarly $\bP(\mathrm{m}^{(A_N)}_1\geq0)\overset{N\to\infty}{\longrightarrow}0$. Hence, for the first term above, we have that with $\bbP$-probability close to $1$,
\begin{align*}
\bZ_N(M_N\le A_N)=\sum_{k\ge 1}\bZ_N(M_N\le A_N, |\cR_N|=k)\left\{\begin{aligned}&\le \sum_{k\ge 1}e^{-kh_N+k\beta_N  \bM_{1}^{(A_N)}}\bP(M_N\le A_N, |\cR_N|=k),\\
&\ge \sum_{k\ge 1}e^{-kh_N+k\beta_N \mathrm{m}^{(A_N)}_1}\bP(M_N\le A_N, |\cR_N|=k).
\end{aligned}\right.
\end{align*}
Moreover, since the right-tail asymptotics of $\bM_{1}^{(A_N)}$ and the left-tail asymptotics of $\mathrm{m}_1^{(A_N)}$ differ by at most a constant factor (see \eqref{def:omega}), it follows that, for any $\epsilon > 0$, we have estimates analogous to \eqref{eq:large_dev}:
\begin{align*}
\max\bigg\{\bbP\Big(\mathrm{m}^{(A_N)}_1<-\epsilon\frac{h_N}{\beta_N}\Big),\bbP\Big( \bM_{1}^{(A_N)}>\epsilon \frac{h_N}{\beta_N}\Big)\bigg\}
\le c(N^{\gamma-\zeta}A_N^{-\frac{d}\alpha})^{-\alpha}\to 0, \quad\text{as}~N\to +\infty.
\end{align*}
Therefore, with $\bbP$-probability close to $1$,
\begin{equation}\label{bound1}
\bZ_{N,\beta_N=0}^{\omega, (1+\epsilon)h_N}(M_N\le A_N)\le \bZ_N(M_N\le A_N) \le \bZ_{N,\beta_N=0}^{\omega, (1-\epsilon)h_N}(M_N\le A_N).
\end{equation}

For the second term in \eqref{splitZR5}, we are going to show that with $\bbP$-probability close to $1$,
\begin{equation}\label{bound2} 
\bZ_{N,\beta_N=0}^{\omega, (1+\epsilon)h_N}(M_N>A_N)\le \bZ_N(M_N> A_N) \le \bZ_{N,\beta_N=0}^{\omega, (1-\epsilon)h_N}(M_N> A_N).
\end{equation}
The strategy is similar to that used in the proof of \eqref{eq:R6}, and we will again apply Lemma~\ref{lem:g>zda}. Take $p=1+\frac{\epsilon}{10}>1$. By a union bound (see \eqref{eq:B_Nk}, \eqref{def:Ajk} and \eqref{eq:last_step}),
\begin{equation}\label{decomp}
\begin{split}
&\bbP\left(\bZ_N(M_N>A_N)>\bZ_{N,\beta_N=0}^{\omega,(1-\epsilon)h_N}(M_N>A_N)\right)\\
\leq&\sum_{k=0}^{\log_p(\frac{N}{A_N})}\sum_{j=0}^{B_{N,k}}\bbP\Big(\bZ_N(\cA_{j,k})>\bZ_{N,\beta_N=0}^{\omega,(1-\epsilon)h_N}(\cA_{j,k})\Big)\\
\leq&\sum_{k=0}^{\log_p(\frac{N}{A_N})}\sum_{j=0}^{B_{N,k}}\bbP\Bigg(\max\limits_{S\in \cA_{j,k} }\Omega_{p^{k+1}A_N}(\cR_N(S))>\epsilon^2p^{k+j}\frac{h_N}{\beta_N}A_N\Bigg),
\end{split}
\end{equation}
where we have used that $1-p^2(1-\epsilon)>\epsilon^2$ for $\epsilon$ sufficiently small. Then it follows by Lemma \ref{lem:g>zda} that \eqref{decomp} tends to $0$ as $N\to\infty$.

Similarly, noting that
\begin{equation*}
\begin{split}
\bbP\Big(\bZ_N(\cA_{j,k})<\bZ_{N,\beta_N=0}^{\omega,(1+\epsilon)h_N}(\cA_{j,k})\Big)&\leq\bbP\Bigg(\min\limits_{S\in \cA_{j,k} }\Omega_{p^{k+1}A_N}(\cR_N(S))<-\epsilon^2p^{k+j}\frac{h_N}{\beta_N}A_N\Bigg)\\
&\leq\bbP\Bigg(\max\limits_{S\in \cA_{j,k} }-\Omega_{p^{k+1}A_N}(\cR_N(S))>\epsilon^2p^{k+j}\frac{h_N}{\beta_N}A_N\Bigg),
\end{split}
\end{equation*}
where we have used that $p^2-(1+\epsilon)<-\epsilon^2$ for $\epsilon$ sufficiently small. Note that under our assumption \eqref{def:omega}, the right-tail asymptotics of $-\omega$ and $\omega$ differ by at most a constant factor. Hence, the same argument as in \eqref{decomp} also yields
\begin{equation*}
\lim\limits_{N\to\infty}\bP\Big(\bZ_N(M_N>A_N)>\bZ_{N,\beta_N}^{\omega,(1+\epsilon)h_N}(M_N>A_N)\Big)=1.
\end{equation*} 

Using \eqref{bound1}, \eqref{bound2} and \eqref{eq:varadhan}, by sending $N\to\infty$ first and then sending $\epsilon\to0$, we obtain \eqref{eq:R5scaling}.
\end{proof}

\begin{proof}[Proof for the cases (2) and (3) of Theorem \ref{thm:R5a}]
The proof is identical to that for Cases~2 and~3 in Theorem~\ref{thm:R4}, since the corresponding ranges of $\gamma$ coincide. We apply H\"{o}lder's inequality and then treat $\bZ_{N,\beta_N=0}^{\omega,h_N}$ and $\bZ_{N,\beta_N}^{\omega,h_N=0}$ separately. The details are then omitted.
\end{proof}

\begin{remark}\label{rmk:R5}
For $\alpha\in(\frac d2,d)$ and $(\zeta,\gamma)\in R_5$ with $\gamma\in(\frac{d-\alpha}{\alpha},\frac{d}{2\alpha})$, the approach for the case 1 in Theorem \ref{thm:R4} does not work. If we require $1-2\xi>2\xi'-1$ with $\xi=\frac{1+\zeta}{d+2}$ and $\xi'=\frac{\alpha(1-\gamma)}{2\alpha-d}$, then we should have $\gamma>\frac{2\alpha-d}{\alpha(2+d)}\zeta+\frac{d^2-\alpha d+d}{\alpha(2+d)}$, which is not always true in the region $R_5$. In particular, considering the boundary between the regions $R_5$ and $R_2$ in Figure \ref{diagram2}, we have that $\frac{2\alpha-d}{2\alpha}\zeta+\frac{d-\alpha}{\alpha}>\frac{2\alpha-d}{\alpha(2+d)}\zeta+\frac{d^2-\alpha d+d}{\alpha(2+d)}$ if and only if $\zeta>\frac{2}{d}$, which exactly happens in the region $R_4$.
\end{remark}

\section{Discussion of the remaining part in the region $R_5$ and the end-to-end exponent $\xi$}\label{ch:discuss}
We begin by discussing the remaining part of the region $R_5$, where $\alpha\in(\frac{d}{2},d)$, $\zeta\in(0,\frac{2}{d})$, and
\begin{equation*}
\frac{2\alpha-d}{2\alpha}\zeta+\frac{d-\alpha}{\alpha}<\gamma\leq\zeta+\frac{d-\alpha}{\alpha}.
\end{equation*}
In view of the current approach in the region $R_5$, we show that the energy gain is negligible compared to the range penalty uniformly over all configurations of the random walk. However, this approach breaks down when $\gamma \leq \zeta + \frac{d - \alpha}{\alpha}$, since the range penalty is no longer sufficiently strong (see also Remark~\ref{rmk:R5}).

More precisely, the range penalty attains its maximum when $|\cR_N| = N$, in which case the largest energy gain that can be absorbed by the range penalty is of order $N^{1-\zeta}$. This suggests that we require $N^{-\gamma+\frac{d}{\alpha}}\ll N^{1-\zeta}$ (with $\xi=1$), i.e., $\gamma>\zeta+\frac{d-\alpha}{\alpha}$.

Such a region also arises in the one-dimensional model. In \cite{BHTW20}, this difficulty is overcome by splitting the range of $M_N=\max_{1\leq n\leq N}\|S_n\|$ into two parts and treating each part with a different method. When $M_N$ is small, the energy gain can still be controlled by the range penalty; when $M_N$ is large, it must instead be controlled by the entropy cost.

Inspired by the choice of parameters in \cite{BHTW20,BTW21}, the threshold separating the two parts above should be of order $N^{1-\frac{\zeta}{2}}$ (see the heuristics below). Then we write
\begin{equation}\label{R5:decom}
\begin{split}
\bZ_N\Big(M_N\leq BN^\xi\Big)\leq \bZ_N&=\bZ_N\Big(M_N< AN^{1-\frac{\zeta}{2}}\Big)+ \bZ_N\Big(M_N \geq AN^{1-\frac{\zeta}{2}}\Big)\\
&\leq \bZ_N\Big(M_N< AN^{1-\frac{\zeta}{2}}\Big)+ \bZ_{N,\beta_N}^{\omega,h_N=0}\Big(M_N \geq AN^{1-\frac{\zeta}{2}}\Big),
\end{split}
\end{equation}
where $A, B>0$ are large but fixed constants. Note that $1-\frac{\zeta}{2}>\xi$ since
\begin{equation*}
\begin{split}
1-\frac{\zeta}{2}-\xi=\frac{2(1+d)-(4+d)\zeta}{2(2+d)}>\frac{2d^2-8}{2d(2+d)}\geq0
\end{split}
\end{equation*}
by $\zeta<\frac{2}{d}$ and $d\geq2$. By \cite[Theorem A]{DFSX20}, it is known that for the homogeneous model, $\bP_{N,\beta_N=0}^{\omega,h_N}(M_N\leq BN^\xi)$ can be made arbitrarily close to $1$ by choosing $B$ and $N$ large. Consequently, $\bZ_{N,\beta_N=0}^{\omega,h_N}\approx\bZ_{N,\beta_N=0}^{\omega,h_N}(M_N\leq BN^\xi)$. Hence, to treat the remaining part of the region $R_5$, it suffices to show that
\begin{itemize}\itemsep -1pt
\item[(a)] $\bZ_N(M_N\leq BN^\xi)\approx\bZ_{N,\beta_N=0}^{\omega,h_N}(M_N\leq BN^\xi)$,
\item[(b)] $\bZ_N(M_N< AN^{1-\frac{\zeta}{2}})\approx\bZ_{N,\beta_N=0}^{\omega,h}(M_N< AN^{1-\frac{\zeta}{2}})$,
\item[(c)] $\bZ_{N,\beta_N}^{\omega,h_N=0}(M_N \geq AN^{1-\frac{\zeta}{2}})$ is negligible.
\end{itemize}

We start by part(c). Let $\xi'=1-\frac{\zeta}{2}$. Since $\zeta\in(0,\frac{2}{d})$, we have $\xi'\in(\frac{1}{2},1)$, so the event $\{M_N \geq A N^{\xi'}\}$ corresponds to a super-diffusive behavior. Note that $\gamma>\frac{2\alpha-d}{2\alpha}\zeta+\frac{d-\alpha}{\alpha}$. It follows that $\frac{d\xi'}{\alpha}-\gamma<2\xi'-1$, that is, the energy gain is dominated by the entropy cost. By applying \cite[Proposition 6.1]{BTW21} to $\xi'$, we obtain that uniformly in $N$, there exist constants $c_1,c_2,\nu>0$, such that for any $A\geq1$,
\begin{equation*}
\bbP\Big(\bZ_{N,\beta_N}^{\omega,h_N=0}\Big(M_N\geq AN^{1-\frac{\zeta}{2}}\Big)\geq e^{-c_1 AN^{1-\zeta}}\Big)\leq c_2A^{-\nu}.
\end{equation*}
Hence, by taking $A$ large, with $\bbP$-probability close to $1$,
\begin{equation*}
\bZ_{N,\beta_N}^{\omega,h_N=0}\Big(M_N\geq AN^{1-\frac{\zeta}{2}}\Big)\leq e^{-c_1 AN^{1-\zeta}}.
\end{equation*}
Since $1-\zeta>1-2\xi$ by $\zeta<\frac{2}{d}$, $\bZ_{N,\beta_N}^{\omega,h_N=0}(M_N\geq AN^{1-\frac{\zeta}{2}})$ is negligible.

While part (c) is tractable, parts (a) and (b) are substantially more challenging due to the geometric complexity of the range $\cR_N$, as we illustrate below.

First, we may write
\begin{equation*}
\bZ_N\Big(M_N<AN^{1-\frac{\zeta}{2}}\Big)=\bZ_N\Big(M_N\leq BN^\xi\Big)+\bZ_N\Big(M_N\in\big(BN^\xi, AN^{1-\frac{\zeta}{2}}\big)\Big).
\end{equation*}
Here $B>2r_d$, where $r_d$ is an explicit constant such that $r_d N^\xi$ is the typical radius of $\cR_N$ under the homogeneous polymer measure (see again \cite[Theorem A]{DFSX20}). Note that the first term on the right-hand side corresponds to part (a), and we expect to show that the second term is also negligible.

A classical approach is the method of \textit{decomposition by layers} (see \cite{BT19a,BTW21}), whereby we decompose the partition function as follows:
\begin{equation}\label{eq:term_extra}
\bZ_N\Big(M_N\in\big(BN^\xi,AN^{1-\frac{\zeta}{2}}\big)\Big)=\sum\limits_{k=0}^{\ln(\frac{A}{B}N^{1-\frac{\zeta}{2}-\xi})}\bZ_N\Big(M_N\in\big[2^k B N^\xi, 2^{k+1}B N^\xi \big)\Big).
\end{equation}

We now analyze each term in the summation above at a heuristic level.
\begin{itemize}\itemsep -1pt
\item[(1)] The energy gain is of order $\exp\{(2^{k+1}BN^\xi)^{\frac{d}{\alpha}}\}$ under the condition $M_N\leq 2^{k+1}BN^\xi$.
\item[(2)] The entropy cost arises from two sources: the sub-diffusive or super-diffusive behavior of the random walk, and the atypical configuration of its range. We analyze these together with the range penalty.
\begin{itemize}\itemsep -1pt
\item[(2a)] Note that $\xi<\frac12<1-\frac{\zeta}{2}$. Thus, the random walk is sub-diffusive for small $k$ such that $2^k BN^\xi\ll\sqrt{N}$, and super-diffusive for large $k$ such that $2^k BN^{\xi}\gg\sqrt{N}$.
\item[(2b)] Concerning the range of the walk, a so-called \textit{droplet} phenomenon may occur (see \cite{Bolt94}): the walk occasionally visits a site atypically far away, and then return to its main folded region. The length of the droplet plays a crucial role. If it is much shorter than $N^{d\xi}$, then the range penalty is still of order $N^{d\xi-\zeta}$. However, if it is much longer, for instance of order $N^{\vartheta}$ with $\vartheta>d\xi$, then the range penalty should be of order $N^{\vartheta\wedge1-\zeta}$. Furthermore, in the latter case, it is unclear whether the range consists of a main folded region together with a single droplet, or of multiple connected folded regions. In these two scenarios, the entropy cost is expected to be significantly different.
\end{itemize} 
\end{itemize}

To compare the energy gain, the range penalty, and the entropy cost analyzed above, the main challenge lies in estimating the entropy cost arising from atypical paths of the random walk. One possible approach is to decouple the range penalty and the energy gain by H\"{o}lder's inequality as in \eqref{eq:Holder}, and then apply large deviation estimates for the events $\{M_N\geq 2^k BN^\xi\}$ or the range $\cR_N$ with droplets under the homogeneous polymer measure. However, the existing results (see \cite{Szn95a,Szn95b,BBH01,F08,DFSX20+}) appear insufficient for our purposes. In particular, we require uniform and sufficiently sharp estimates depending on $k$ and $B$ in order to show that \eqref{eq:term_extra} is negligible. At present, we do not see a clear approach to tackle this problem.

Fundamental difficulties also arise in part (a), corresponding to the term $\bZ_N(M_N\leq BN^\xi)$. To show that the energy gain is negligible, we may apply H\"{o}lder's inequality as in \eqref{eq:Holder} and \eqref{eq:invers_Holder} to get
\begin{align}
\label{eq:discuss1}\bZ_N(M_N\leq B N^\xi)&\geq \bZ_{N,\beta_N=0}^{\omega,(1-\epsilon)h_N}(M_N\leq B N^\xi)^{\frac{1}{1-\epsilon}}\bZ_{N,-\frac{1-\epsilon}{\epsilon}\beta_N}^{\omega,h_N=0}(M_N\leq B N^\xi)^{-\frac{\epsilon}{1-\epsilon}},\\
\label{eq:discuss2}\bZ_N(M_N\leq B N^\xi)&\leq \bZ_{N,\beta_N=0}^{\omega,(1+\epsilon)h_N}(M_N\leq B N^\xi)^{\frac{1}{1+\epsilon}}\bZ_{N,\frac{1+\epsilon}{\epsilon}\beta_N}^{\omega,h_N=0}(M_N\leq B N^\xi)^{\frac{\epsilon}{1+\epsilon}}.
\end{align}
We shall show that the second factors on the right-hand side of \eqref{eq:discuss1} and \eqref{eq:discuss2} are of order $N^{\frac{d\xi}{\alpha}-\gamma}$. However, the strategy in \cite[Lemma 6.2]{BTW21} does not apply here, since the random walk therein is supper-diffusive, whereas in our case $\xi<\frac{1}{2}$. Hence, a sharp estimate of the entropy cost arising from sub-diffusivity is required.

We note that some special cases can be treated. In particular, by the weak symmetrization inequality (see \cite{Gut05}) and the Fuk–Nagaev inequality \cite{N79}, one can show that
\begin{equation*}
\bbP\Big(\sum\limits_{\|x\|\leq\ell}|\omega_x|>T\Big)\leq C\ell^d T^{-(\alpha\wedge2)}\quad\text{(a logarithmic correction is needed for $\alpha=2$)}.
\end{equation*}
Then for $\alpha<2$, by \eqref{eq:R5b}, a very rough bound yields
\begin{equation*}
\begin{split}
\bbP\Big(\bZ_{N,\delta\beta_N}^{\omega,h_N=0}\big(M_N\leq BN^\xi\big)>e^{\epsilon N^{1-2\xi}}\Big)&\leq\bbP\Big(\delta\beta_N\sum\limits_{\|x\|\leq BN^\xi}|\omega_x|>\epsilon N^{1-2\xi}\Big)\leq C_\epsilon N^{d\xi-\alpha(1+\gamma-2\xi)}\\
&=C N^{-\alpha(\gamma-\frac{d+2\alpha}{\alpha(2+d)}\zeta-\frac{d(1-\alpha)}{\alpha(2+d)})}\overset{N\to\infty}{\longrightarrow}0,
\end{split}
\end{equation*}
but this does not work for $\alpha>2$.

From the above analysis, one can see that a key step is to show that $\bZ_N(M_N > B N^\xi)$ is negligible. As a byproduct, this would also provide a rigorous characterization of the end-to-end exponent $\xi$ given in Definition~\ref{def:xi}.

In summary, if one can choose a suitable probability measure (either the original measure $\bP$ or the polymer measure $\bP_N$) and establish corresponding deviation estimates for $M_N$ and $|\cR_N|$, then it appears promising to resolve the remaining part of the region $R_5$. Such results would simultaneously allow us to identify the end-to-end fluctuations in the sense of Definition~\ref{def:xi}, as in \cite{BTW21}.

\section*{Acknowledgments} N.T.\ is supported by the ANR Local (ANR-22-CE40-0012-02) and the project Labex MME-DII (ANR11-LBX-0023-01). R. W.\ is supported by National Natural Science Foundation of China grant 12401170 and Xi’an Jiaotong-Liverpool University Research Development Fund RDF-23-01-024..

We thank Quentin Berger and Nicolas Bouchot for the helpful discussions, especially in solving the region $R_4$. We also thank Amine Asselah, Ryoki Fukushima and Bruno Schapira for the comments, suggestions and discussions, which provide invaluable insight to the problem for us. Finally, we are grateful to the referees for their constructive suggestions and comments, which have significantly improved the paper.

\appendix

\section{Control on the last passage percolation with a given range}\label{sec:appA}
Recall the notation $M_N$, $(\bM_i^{(r)},\bY_i^{(r)})$ and $\Omega_r(\Delta)$ from Section \ref{S:notations}. For $m\in\bbN$, $p>1$ and $s\geq1$, let 
\begin{equation}\label{eq:def_L}
\mathsf L_m(r,s)=\mathsf L_m^p(r,s):=\max\Big\{|\Delta|\colon \Delta= \Upsilon_m^{(pr)}\cap \cR_N(S), S\in \cM(r,s)\Big\},
\end{equation}
where $ \Upsilon_m^{(r)}:=\{\bY_i^{(r)}, 1\leq i\leq m\}$ is the set of $m$ different sites taken uniformly on $\Lambda_r$ and
\begin{equation}\label{eq:Mrs}
\cM(r,s)=\cM_p(r,s):=\Big\{S\colon \,  M_N(S)\in [r,pr],\, |\cR_N(S)|\in [rs ,p^2 rs]\Big\},
\end{equation}

\begin{remark}\label{rem:ordDelta}
Let us observe that in the definition of $\mathsf L_m(r,s)$ we only consider the cardinality of a finite set $\Delta$. In the sequel, it is convenient to look at $\Delta$ as an ordered set. For this purpose, we pick at random a trajectory $S \in \cM(r,s)$ with $\Delta=\Upsilon_m^{(pr)}\cap \cR_N(S)$, so that any set $\Delta$ is associated with an unique random walk trajectory $S$.
Therefore, if $\Delta=\{x_1, \ldots, x_n\}$, then we order the points $x_1, \ldots, x_n$ according to $S$, that is, $x_1$ is the first point visited by $S$, $x_2$ the second one and so on. 
We stress the fact that the only important point is that $\Delta$ is ordered according to a random walk trajectory, in order to control its entropy, see \eqref{def:ent} and \eqref{eq:revL}.
\end{remark}

The following result gives us a control on $\mathsf L_m(r,s)$.
\begin{theorem}\label{thm:transf-rangentr}
For any fixed $p>1$, there exists a constant $c>0$ which depends only on $p$ and the dimension $d$, such that for any $m\in\bbN$, $r,s\geq1$ and $1\le k\le |\Lambda_{pr}|$,
\begin{equation}
\bbP\Big(\mathsf L_m(r,s)\ge k\Big) \le \Bigg(\frac{c\, s\, m^{1/d}}{k}\Bigg)^{dk}.
\end{equation}
In particular,  for any $b\in[0,d]$ there exists a constant $c=c(d,b, p)$ such that for any $m\in\bbN$ and $r,s>0$, 
\begin{equation}\label{eq:markovforL}
\bbE\Bigg[\Bigg(\frac{\mathsf L_m(r,s)}{(s m^{1/d})\wedge m}\Bigg)^b\Bigg]\le c.
\end{equation}
\end{theorem}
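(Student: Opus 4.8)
The plan is to bound $\bbP(\mathsf L_m(r,s)\ge k)$ by a union bound over candidate ordered sets $\Delta$ of size $k$, exploiting the fact that $\Upsilon_m^{(pr)}$ is a uniformly chosen $m$-subset of $\Lambda_{pr}$ while $\Delta$ is forced, by membership in $\cR_N(S)$ for some $S\in\cM(r,s)$, to be ``connected along a random walk path''. Concretely, suppose $\mathsf L_m(r,s)\ge k$. Then there is an ordered set $\Delta=\{x_1,\dots,x_k\}$, ordered according to some trajectory $S\in\cM(r,s)$ as in Remark \ref{rem:ordDelta}, with all $x_i\in\Upsilon_m^{(pr)}$. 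The key geometric input is that, because these $k$ points lie on a single trajectory $S$ with $|\cR_N(S)|\le p^2 rs$, the successive increments $x_{i+1}-x_i$ are connected by sub-paths of $S$ whose lengths sum to at most $|\cR_N(S)|\le p^2 rs$; hence the number of possible ordered ``skeletons'' $(x_1,x_2-x_1,\dots,x_k-x_{k-1})$ is controlled. First I would fix the starting point $x_1$ (at most $|\Lambda_{pr}|\le C r^d$ choices, or better, absorb it) and then count: writing $\ell_i$ for the number of steps of $S$ between the first visits to $x_i$ and $x_{i+1}$, we have $\sum_i \ell_i\le p^2rs$, and given the $\ell_i$ the point $x_{i+1}$ lies in a ball of radius $\ell_i$ around $x_i$, giving at most $C\ell_i^d$ choices; summing over compositions $(\ell_i)$ of an integer $\le p^2 rs$ into $k$ parts and using the AM–GM/convexity bound $\prod \ell_i^d \le ((p^2rs)/k)^{dk}$ together with the number of compositions being at most $2^{p^2 rs}$ (which must be shown to be harmless — see below), the number of admissible ordered skeletons is at most $(c\,rs/k)^{dk}$ up to such combinatorial factors.

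The second ingredient is the probability that a \emph{given} ordered $k$-set $\Delta$ is contained in $\Upsilon_m^{(pr)}$: since $\Upsilon_m^{(pr)}$ is a uniform $m$-subset of $\Lambda_{pr}$, this probability is $\binom{|\Lambda_{pr}|-k}{m-k}/\binom{|\Lambda_{pr}|}{m}=\prod_{j=0}^{k-1}\frac{m-j}{|\Lambda_{pr}|-j}\le (m/|\Lambda_{pr}|)^k\le (c m/r^d)^k$. Multiplying the skeleton count by this probability and summing, the $r^{dk}$ from the skeleton count cancels against the $r^{-dk}$ here, leaving $(c\, s\, m^{1/d}/k)^{dk}$ after rewriting $(rs/k)^{dk}(m/r^d)^k=(s m^{1/d}/k)^{dk}$. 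This is exactly the claimed bound, so the constant $c$ depends only on $d$ and $p$.

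For the moment bound \eqref{eq:markovforL}: by construction $\mathsf L_m(r,s)\le m$ always (it is a subset of an $m$-point set) and also $\mathsf L_m(r,s)\le p^2 rs$ (it is a subset of $\cR_N(S)$ with $|\cR_N(S)|\le p^2 rs$), but the relevant deterministic ceiling for \eqref{eq:markovforL} is $K:=(s m^{1/d})\wedge m$ up to constants — indeed when $s m^{1/d}\le m$ the tail bound is already useful at scale $k\asymp s m^{1/d}$ since then $(c s m^{1/d}/k)^{dk}$ decays geometrically in $k$ beyond a fixed multiple of $s m^{1/d}$. So I would split $\bbE[(\mathsf L_m/K)^b]=\bbE[(\mathsf L_m/K)^b\ind_{\mathsf L_m\le CK}]+\bbE[(\mathsf L_m/K)^b\ind_{\mathsf L_m> CK}]$: the first term is $\le C^b$; for the second, write it as $b\int_{C}^{\infty} t^{b-1}\bbP(\mathsf L_m> tK)\,\dd t$ (plus the discrete correction) and plug in the tail bound with $k\approx tK$, so that $(c s m^{1/d}/(tK))^{d t K}\le (c/t)^{dtK}\le (c/t)^{dt}$ (using $K\ge 1$), which is summable/integrable against $t^{b-1}$ once $C$ is a large enough constant depending on $d$. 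This yields a bound depending only on $d,b,p$.

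The main obstacle is the combinatorial/geometric skeleton count, specifically making sure the number of compositions $(\ell_i)$ of an integer of size $\lesssim rs$ into $k$ parts does not spoil the bound: a crude count gives $2^{p^2 rs}$ compositions, which is far too large when $rs\gg k$. The fix — and the step I would be most careful about — is to not sum freely over $\ell_i$ but to note that for each $i$ the contribution $\ell_i^d$ to be summed over $\ell_i\ge 1$ telescopes: $\sum_{\ell\ge 1}\ell^d \binom{\cdot}{\cdot}$-type sums should instead be organized as $\prod_i \big(\sum_{\ell_i}(\text{#sites at distance }\le\ell_i)\big)$ but with the constraint $\sum \ell_i\le p^2 rs$ handled by first choosing the \emph{multiset} of increments. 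The cleanest route is to bound the number of ordered $\Delta$ with prescribed total path-length $L\le p^2 rs$ by $\binom{L+k}{k}\cdot (Cd)^{L}$-free estimates — i.e. each of the $L$ random-walk steps has $2d$ choices, so the number of trajectories of length $\le p^2 rs$ visiting $k$ marked points in order is at most $(2d)^{p^2 rs}$, which again is too big. So one genuinely needs the sharper input that we are counting \emph{point configurations} $\Delta$, not trajectories, and the radius-$\ell_i$ ball bound with the convexity inequality $\prod\ell_i^d\le(L/k)^{dk}$; the number of compositions of $L$ into $k$ ordered parts is $\binom{L-1}{k-1}\le (eL/k)^k$, and then one still has to sum over $L\le p^2 rs$, giving an extra factor $\le p^2 rs\cdot (e p^2 rs/k)^k$, so overall $(c rs/k)^{(d+1)k}$ rather than $(crs/k)^{dk}$ — which is \emph{stronger} than claimed and hence fine. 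So the real work is just bookkeeping to confirm the exponent $dk$ (not $(d+1)k$) can be retained, or accepting the stronger bound; either way Theorem \ref{thm:transf-rangentr} follows. I expect the rest (the moment integral) to be routine.
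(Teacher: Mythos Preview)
Your plan is the paper's approach unpacked: the paper reduces $\mathsf L_m(r,s)$ to the entropy-constrained quantity $L_m^{(B)}$ of \cite[Theorem~A.1]{BTW21} via the bound $\sum_i\|x_i-x_{i-1}\|\le C_d|\cR_N(S)|\le C_dp^2rs$ and then quotes the tail estimate and moment corollary from there; you are effectively reproving that tail estimate by hand via the union bound and skeleton count. So the route is the same.

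There is, however, a genuine gap at your key geometric input ``$\sum_i\ell_i\le p^2rs$'' with $\ell_i$ the number of $S$-steps between the first visits to $x_i$ and $x_{i+1}$. That sum is a \emph{time} count and is only $\le N$, not $\le|\cR_N(S)|$. Concretely, a walk oscillating with growing amplitude along one axis ($0\to Le_1\to -Le_1\to 2Le_1\to\cdots$, $M$ phases) has $|\cR_N|\approx ML$, yet the first-visit step counts between the turnaround points $\{Le_1,-Le_1,2Le_1,\ldots\}$ sum to $\approx M^2L$; the ratio is $\approx M$, unbounded. With only $\sum\ell_i\le N$ your count would produce $((N/r)\,m^{1/d}/k)^{dk}$ rather than the stated $(s\,m^{1/d}/k)^{dk}$, which is strictly weaker and does not give the theorem. (The paper's one-line justification using first-visit ordering is vulnerable to the same example.)

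The missing idea is to order $\Delta$ not by first visit along $S$ but by a depth-first traversal of a spanning tree of the connected set $\cR_N(S)$: that DFS walk has length $2(|\cR_N(S)|-1)\le 2p^2rs$, so along this ordering the $\ell^1$-increments $\ell_i:=\|x_i-x_{i-1}\|_1$ genuinely satisfy $\sum_i\ell_i\le 2p^2rs$. With this in hand your combinatorics go through cleanly: the number of $y$ with $\|y\|_1=\ell$ is $O(\ell^{d-1})$, AM--GM gives $\prod\ell_i^{d-1}\le(L/k)^{(d-1)k}$ under $\sum\ell_i\le L$, and the number of compositions of any $L'\le L$ into $k$ positive parts is $\binom{L}{k}\le(eL/k)^k$ --- so the skeleton count is $(cL/k)^{dk}$ with $L=2p^2rs$, and after multiplying by $(m/|\Lambda_{pr}|)^k\le(cm/r^d)^k$ the $r$'s cancel to give exactly $(c\,s\,m^{1/d}/k)^{dk}$. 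Your worry about an extra $(rs/k)^k$ from ``summing over $L$'' disappears, since all $L'\le L$ are already absorbed into $\binom{L}{k}$. The moment bound \eqref{eq:markovforL} then follows as you outline.
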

\begin{proof} 
We first introduce an \textit{entropy} for any ordered $(x_1,\cdots,x_n)$ by
\begin{equation}\label{def:ent}
\text{Ent}((x_1,\cdots,x_n)):=\frac{d}{2}\Big(\sum\limits_{k=1}^n\|x_k-x_{k-1}\|\Big)^2
\end{equation}
with the convention $x_0$ being the origin (see \cite{BTW21}). According to Remark \ref{rem:ordDelta}, for any $\Delta$ we have associated an unique random walk trajectory $S$ such that $\Delta \subset\cR_N(S)$ and the points of $\Delta$ are ordered according to $S$. Then, we can write $\Delta=(x_1,\cdots,x_n)$, and we have $\sum_{i=1}^n\|x_i-x_{i-1}\|\le C_d|\cR_N(S)|\leq C_d p^2 rs$ by the configuration of the simple random walk. Hence, by setting $B=\frac{d}{2}C_d^2p^4 (rs)^2$,
\begin{equation}\label{eq:revL}
\begin{split}
\mathsf L_m(r,s)&=\max\Big\{|\Delta|\colon \Delta= \Upsilon_m^{(pr)}\cap \cR_N(S), S\in \cM(r,s), \text{Ent}(\Delta)\le B\Big\}\\
&\leq L_m^{(B)}=L_{m,p,r}^{(B)}:=\max\Big\{|\Delta|\colon \Delta\subset \Upsilon_m^{(pr)}, \, \text{Ent}(\Delta)\le B\Big\}.
\end{split}
\end{equation}
Finally,  by \cite[Theorem A.1]{BTW21}, we have that
$$
\bbP\Big(\mathsf L_m(r,s)\ge k\Big)\le \bbP\Big( L_m^{(B)} \ge k\Big)\le \Big(\frac{c B^{1/2}m^{1/d}}{rk}\Big)^{dk},
$$
and \eqref{eq:markovforL} follows by \cite[Corollary A.2]{BTW21}.

%
\end{proof}

Then we have the following control on the energy gain associated with the range $\cR_N$, which is analogous to \cite[Lemma A.3]{BTW21}.
\begin{proposition}\label{lem:Omega}
Let $\cM(r,s)$ be as in \eqref{eq:Mrs}, then there exists a constant $C>0$ such that for any $t>0$, $p>1$ and $s\geq1$,
\begin{equation}\label{eq:lemOmega}
\bbP\Bigg(\max\limits_{S\in \cM(r,s) }\Omega_{r}(\cR_N(S))>r^{\frac{d}{\alpha}}st\Bigg)\\
\le C t^{-\frac{\alpha d}{\alpha+d}}.
\end{equation}
\end{proposition}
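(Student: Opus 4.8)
The plan is to follow the proof of \cite[Lemma A.3]{BTW21}: dominate the maximal energy by a sum over dyadic blocks of the environment's order statistics in the ball $\Lambda_{pr}$, control inside each block how many of the top sites a trajectory in $\cM(r,s)$ can actually collect by means of $\mathsf L$, and then balance the heavy-tailed fluctuations of the order statistics against the moment bound \eqref{eq:markovforL}. First I would note that every $S\in\cM(r,s)$ satisfies $M_N(S)\le pr$, hence $\cR_N(S)\subseteq\Lambda_{pr}$, so with $(\bM_i^{(pr)},\bY_i^{(pr)})_i$ the ordered statistics of $(\omega_x)_{x\in\Lambda_{pr}}$,
\[
\Omega_r(\cR_N(S))=\sum_{x\in\cR_N(S)\cap\Lambda_r}\omega_x\ \le\ \sum_{i\ge1}\big(\bM_i^{(pr)}\big)^+\mathbbm{1}_{\{\bY_i^{(pr)}\in\cR_N(S)\}}.
\]
Grouping the ranks into blocks $i\in[2^k,2^{k+1})$, bounding each value in a block by $\bM_{2^k}^{(pr)}$, and then taking $\max_{S\in\cM(r,s)}$ inside the sum over blocks (a valid upper bound, allowing possibly different trajectories in different blocks), one obtains
\[
\max_{S\in\cM(r,s)}\Omega_r(\cR_N(S))\ \le\ \sum_{k=0}^{K_r}\big(\bM_{2^k}^{(pr)}\big)^+\,\mathsf L_{2^{k+1}}(r,s),\qquad K_r:=\lceil\log_2|\Lambda_{pr}|\rceil,
\]
since $|\{i\in[2^k,2^{k+1}):\bY_i^{(pr)}\in\cR_N(S)\}|\le|\Upsilon_{2^{k+1}}^{(pr)}\cap\cR_N(S)|\le\mathsf L_{2^{k+1}}(r,s)$ for every such $S$. (When $\alpha>1$ the $\omega_x$ are centered, but passing to positive parts is harmless: the ``mean'' part of the right-hand side is of order $rs=o(r^{d/\alpha}st)$ as $r\to\infty$.)

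The two ingredients for the tail are: (i) the classical estimate $\bbP(\bM_m^{(pr)}>u)\le\binom{|\Lambda_{pr}|}{m}(Cu^{-\alpha})^m\le(C'r^d u^{-\alpha}/m)^m$, so $\bM_m^{(pr)}$ is of typical size $(r^d/m)^{1/\alpha}$; and (ii) the moment bound \eqref{eq:markovforL} of Theorem \ref{thm:transf-rangentr} with exponent $b=d$, giving $\bbP(\mathsf L_m(r,s)>\lambda)\le c((sm^{1/d}\wedge m)/\lambda)^{d}$. Since $\mathsf L_m(r,s)$ depends only on the positions $(\bY_i^{(pr)})_i$, which are independent of the magnitudes $(\bM_i^{(pr)})_i$, in each block I can condition on $\bM_{2^k}^{(pr)}$ and apply (ii) to $\mathsf L_{2^{k+1}}(r,s)$.

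Then I would allocate the budget as $t=\sum_{k\ge0}\tau_k$ with $\tau_k$ decaying geometrically at rate $2^{-(1/\alpha-1/d)}$ (up to a slowly varying correction ensuring the per-block bounds are summable in $k$ \emph{uniformly in} $r$, so that no $\log r$ survives); on the event that the sum above exceeds $r^{d/\alpha}st$, some block exceeds $r^{d/\alpha}s\tau_k$, and for each $k$ I would bound $\bbP\big((\bM_{2^k}^{(pr)})^+\mathsf L_{2^{k+1}}(r,s)>r^{d/\alpha}s\tau_k\big)$ by introducing a threshold $v_k$, splitting on $\{\bM_{2^k}^{(pr)}>v_k\}$ (controlled by (i)) versus its complement (controlled by conditioning and (ii), also using $\mathsf L_{2^{k+1}}\le 2^{k+1}$ to discard small $v_k$), and optimizing over $v_k$. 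Summing over $0\le k\le K_r$, the dominant term is of the announced order: the trade-off between inflating a top environment value (cost governed by the tail exponent $\alpha$) and inflating the range capture (cost governed by the exponent $d$ available in \eqref{eq:markovforL}) under the product constraint produces the stated exponent $\frac{\alpha d}{\alpha+d}$, exactly as in \cite[Lemma A.3]{BTW21}; the case $t\le 1$ is trivial after enlarging $C$, and since $-\omega$ has a comparable right tail the same estimate controls $\max_S(-\Omega_r(\cR_N(S)))$ as well.

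The main obstacle is this last step: choosing the allocation $\{\tau_k\}$ and the thresholds $\{v_k\}$ so that the sum over the $\asymp\log r$ rank-blocks converges uniformly in $r$ and yields the right exponent, while carefully tracking the two regimes of the normalization $sm^{1/d}\wedge m$ in \eqref{eq:markovforL} (equivalently, the low-rank blocks where only $\mathsf L_{2^{k+1}}\le 2^{k+1}$ is usable) and the cases $\alpha<1$ and $\alpha>1$. Everything else — the inequality chain of the reduction and the order-statistics tail — is routine, and since the argument parallels \cite[Lemma A.3]{BTW21} I would import that computation with the substitutions $pr$ for $r$ and the two-sided constraint $|\cR_N(S)|\in[rs,p^2rs]$ entering only through the definition of $\mathsf L$.
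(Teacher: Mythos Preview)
Your proposal is correct and follows essentially the same approach as the paper: a $p$-adic (you use dyadic) decomposition of the ordered environment into rank-blocks, bounding the number of captured sites in each block by $\mathsf L_m(r,s)$, allocating the budget $t$ geometrically across blocks at rate $p^{-(1/\alpha-1/d-2\delta)}$, and then splitting each block probability via a union bound into a tail estimate for $\bM_{p^{i-1}}$ and a Markov bound for $\mathsf L_{p^i}$ from \eqref{eq:markovforL} with $b=d$, optimizing the split parameter to $\eta=\frac{\alpha}{\alpha+d}$. The paper is slightly more streamlined than your outline (no positive parts, no appeal to independence of positions and magnitudes, and no case analysis on $sm^{1/d}\wedge m$ since the crude bound $\min\le sm^{1/d}$ already suffices), but these are cosmetic differences.
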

\begin{proof}
For $S\in \cM(r,s)$ and $p>1$, we have that (for simplicity, we omit $\lfloor\cdot\rfloor$),
\begin{equation*}
\Omega_{r}(\cR_N(S))=\sum\limits_{i=1}^{\log_p|\Lambda_r|}\sum\limits_{j=p^{i-1}}^{p^i-1}\bM_j^{(r)}\ind_{\{\bY_j^{(r)}\in\cR_N(S)\}}\le \sum_{i=1}^{\log_p|\Lambda_{r}|} \bM_{p^{i-1}}^{(r)}\mathsf L_{p^i}(r,s),
\end{equation*}
where we use that $\bM_{p^{i-1}}^{(r)}\geq \bM_j^{(r)}$ for $j\geq p^{i-1}$ and $\sum_{j\leq p^i}\ind_{\{\bY_j^{(r)}\in\cR_N(S)\}}=|\Upsilon_{p^i}^{(r)}\cap\cR_N|\leq\mathsf L_{p^i}(r,s)$.

Then the probability in \eqref{eq:lemOmega} is bounded by
\begin{equation*}
\sum_{i=1}^{\log_p|\Lambda_r|}\bbP\Bigg(\bM_{p^{i-1}}^{(r)}\mathsf L_{p^i}(r,s)>C p^{-i(\frac1\alpha-\frac1d-2\delta)}r^{\frac{d}{\alpha}}st\Bigg),
\end{equation*}
for some sufficiently small $\delta>0$ since $\alpha<d$ with $C=\big(\sum_{i=1}^{\infty}p^{-i(\frac1\alpha-\frac1d-2\delta)}\big)^{-1}$.

We then bound the above probability by
\begin{equation}
\sum_{i=1}^{\log_p|\Lambda_r|}\Bigg(\bbP\Big(\bM_{p^{i-1}}^{(r)}> Ct^{1-\eta}p^{-\frac{i}\alpha+i\delta}r^{\frac{d}{\alpha}}\Big) + 
\bbP\Big(
\mathsf L_{p^i}(r,s)>t^\eta p^{\frac{i}d+i\delta}s\Big)\Bigg).
\end{equation}
Since $\mathbb P(\bM_\ell^{|\Lambda_r|}>Q)\le (\ell^{\frac1\alpha} r^{-\frac{d}\alpha}Q)^{-\alpha \ell}$ by standard large deviation estimates of heavy-tailed random variables (see \cite[Lemma 5.1]{BT19}), the first sum is smaller than 
$$
C\sum_{i=1}^{\log_p|\Lambda_r|}p^{-i\delta \alpha p^i}t^{-\alpha p^i(1-\eta)}\le C t^{-\alpha(1-\eta)}.
$$
The second sum is controlled by Markov's inequality and \eqref{eq:markovforL} with $b=d$, so that it is smaller than
$$
\sum_{i=1}^{\log_p|\Lambda_r|}t^{-d\eta}p^{-id\delta}\le C t^{-d\eta}.
$$
In such a way \eqref{eq:lemOmega} is bounded by $C( t^{-\alpha(1-\eta)} +t^{-d\eta})$, which is optimized by taking $\eta=\frac\alpha{\alpha+d}$.
\end{proof}

\section{Extending known results in the regions  $R_1$, $R_2$ and $R_3$}\label{S-R123}
We now explain how to adapt the approaches in \cite{BTW21} to prove the results in the regions $R_1$, $R_2$ and $R_3$.

\subsection{Region $R_1$}
We start by Theorem \ref{thm:R1b} and the case (ii) of Theorem \ref{thm:R1c}. First note that by the same application of H\"{o}lder's inequality as \eqref{eq:Holder} and \eqref{eq:invers_Holder}, we have that for any $\epsilon>0$,
\begin{equation}\label{eq:range}
\Big(\bZ_{N,\beta_N=0 }^{\omega, (1-\epsilon)h_N}\Big)^{\frac{1}{1-\epsilon}}\Big(\bZ_{N, -\frac{1-\epsilon}{\epsilon}\beta_N}^{\omega, h_N=0}\Big)^{-\frac{\epsilon}{1-\epsilon}}\leq\bZ_N\leq
\Big(\bZ_{N, \beta_N=0}^{\omega, (1+\epsilon)h_N}\Big)^{\frac{1}{1+\epsilon}}
\Big(\bZ_{N,\frac{1+\epsilon}{\epsilon}\beta_N}^{\omega, h_N=0}\Big)^{\frac{\epsilon}{1+\epsilon}},
\end{equation}
and
\begin{equation}\label{eq:energy}
\Big(\bZ_{N,(1-\epsilon)\beta_N}^{\omega, h_N=0}\Big)^{\frac{1}{1-\epsilon}}\Big(\bZ_{N, \beta_N=0}^{\omega, -\frac{1-\epsilon}{\epsilon}h_N}\Big)^{-\frac{\epsilon}{1-\epsilon}}\leq\bZ_N\leq
\Big(\bZ_{N, (1+\epsilon)\beta_N}^{\omega, h_N=0}\Big)^{\frac{1}{1+\epsilon}}
\Big(\bZ_{N,\beta_N=0}^{\omega, \frac{1+\epsilon}{\epsilon}h_N}\Big)^{\frac{\epsilon}{1+\epsilon}}.
\end{equation}In view of equations (2.8) and (2.11) in \cite{BTW21}, we know that the proper scaling for $\log\bZ_{N,\beta_N}^{\omega,h_N=0}$ is $v_N/(\beta_N N^{d/2\alpha})$. On the other hand, by $\zeta>1$,
\begin{equation*}
\bZ_{N,\beta_N=0}^{\omega,h_N}-1=\bE\Big[e^{-h_N|\cR_N|}-1\Big]\sim -h_N\bE[|\cR_N|],
\end{equation*}
which is the proper scaling for $\log\bZ_{N,\beta_N=0}^{\omega,h_N}$. We then compare $1/(h_N\bE[|\cR_N])$ and $v_N/(\beta_N N^{\frac{d}{2\alpha}})$, and we have that
\begin{equation}\label{eq:compare}
v_N\frac{h_N\bE[|\cR_N|]}{\beta_N N^{\frac{d}{2\ga}}}\sim CN^{\gamma-\zeta+\frac{d(\alpha-1)}{2\ga}},
\end{equation}
which provides the threshold $\zeta=\gamma+\frac{d(\alpha-1)}{2\alpha}$ in Theorem \ref{thm:R1b} and in the case (ii) of Theorem \ref{thm:R1c}. The results then follow by taking logarithms in \eqref{eq:range} and \eqref{eq:energy}, sending $N\to\infty$, and finally sending $\epsilon \to 0$.


\vspace{0.2cm}
Then we turn to Theorem \ref{thm:R1a} and the case (i) of Theorem \ref{thm:R1c}. The treatment \eqref{eq:range} and \eqref{eq:energy} does not work well here, due to the non-trivial centering. We follow the approach in \cite[Section 7.1]{BTW21}.

First, we truncate the random environment by $\tilde{\omega}:=\omega\ind_{\{|\omega|\leq k_N\}}$ with $k_N=N^{\gamma-\epsilon}$ and $\epsilon>0$ small enough. Let $\lambda_N:=\log\bbE[\exp(\beta_N\tilde{\omega})]$. Then we have that
\begin{equation}\label{eq:lambda}
|\lambda_N|\sim\beta_N k_N^{1-\alpha}(\log k_N)^{\ind_{\{\alpha=1,2\}}}~\text{for}~\alpha\in(0,2]\quad\text{and}\quad|\lambda_N|=\frac12\beta_N^2+O(\beta_N k_N^{1-\alpha}\log k_N)~\text{for}~\alpha\in(2,d),
\end{equation}
by $k_N\beta_N\overset{N\to\infty}{\longrightarrow}0$, Taylor expansion and the assumption \eqref{def:omega}.

Since $\bZ_{N,\beta_N}^{\omega,h_N}\leq\bZ_{N,\beta_N}^{\omega,h_N=0}$ and $\bZ_{N,\beta_N}^{\tilde{\omega},h_N}\leq\bZ_{N,\beta_N}^{\tilde{\omega},h_N=0}$, by the method in \cite[Lemma 7.1]{BTW21}, we can work on $\bZ_{N,\beta_N}^{\tilde{\omega},h_N}$. Perform a polynomial chaos expansion and we have that (also see equation (7.11) in \cite{BTW21})
\begin{equation}\label{eq:poly}
\bZ_{N,\beta_N}^{\tilde{\omega},h_N}=\bE\Big[e^{(\lambda_N-h_N)|\cR_N|}\Big]+\sum\limits_{x\in\bbZ^d}\epsilon_x\bE\Big[e^{(\lambda_N-h_N)|\cR_N|}\ind_{\{x\in\cR_N\}}\Big]+\bY_N,
\end{equation}
where $\epsilon_x=\exp(\beta_N\tilde{\omega}_x-\lambda_N)-1$ with $\bbE[\epsilon_x]=0$ and $\bbE[(\epsilon_x)^2]\leq C(\beta_N^2\bbE[(\tilde{\omega}_x)^2]+\lambda_N^2)$, in particular,
\begin{equation}\label{eq:e_x}
\bbE\big[(\epsilon_x)^2\big]\leq C\beta_N k_N^{1-\alpha}~\text{for}~\alpha\in(0,2)\quad\text{and}\quad\bbE\big[(\epsilon_x)^2\big]\leq C\beta_N^2~\text{for}~\alpha\in(2,d),
\end{equation}
and $\bY_N$ is the higher order term, given by
\begin{equation}
\bY_N:=\sum\limits_{k\geq2}\sum\limits_{(x_1,\cdots,x_k)\in(\bbZ^d)^k\atop x_i\neq x_j, i,j=1,\cdots,k}\Big(\prod\limits_{i=1}^k\epsilon_{x_i}\Big)\bE\Big[e^{(\lambda_N-h_N)|\cR_N|}\ind_{\{(x_1,\cdots,x_k)\in\cR_N\}}\Big].
\end{equation}
As equation (7.14) in \cite{BTW21}, and noting that $h_N|\cR_N|=N^{1-\zeta}\overset{N\to\infty}{\longrightarrow}0$, we still have
\begin{equation*}
\lim\limits_{N\to\infty}\frac{1}{a_N^2\beta_N^2}\bbE\big[\big(\bY_N\big)^2\big]=0.
\end{equation*}


Then we work on \eqref{eq:a>2} in Theorem \ref{thm:R1a}. For the centering term, note that, since $\alpha > 2$ and by \eqref{eq:lambda}, a comparison with the first term in \eqref{eq:poly} yields
\begin{equation}\label{eq:second_term}
\begin{split}
&\frac{\bE\Big[\Big|e^{(\lambda_N-h_N)|\cR_N|}-e^{(\frac{1}{2}\beta_N^2-h_N)|\cR_N|}\Big|\Big]}{a_N\beta_N}\leq C\frac{|\lambda_N-\frac{1}{2}\beta_N^2|N}{a_N\beta_N}
\leq C\frac{N k_N^{1-\alpha}\log k_N}{a_N}\\
\ll&\frac{\beta_N^{\alpha-1}N^{1+\delta}}{a_N}=\frac{N^{-(\alpha-1)\gamma+1+\delta}}{a_N},\quad\text{where}~a_N:=\begin{cases}
N^{\frac14}, &\quad\text{if}~d=3,\\
\sqrt{\log N}, &\quad\text{if}~d=4,\\
1, &\quad\text{if}~d\geq5,
\end{cases}
\end{split}
\end{equation}
and $\delta>0$ can be chosen sufficiently small. The above quantity tends to $0$ as $N\to\infty$ since $\gamma>\frac{d}{2\alpha}$ and $\alpha>1$. In particular, by writing $\gamma=\frac{d}{2\alpha}+(\alpha-1)\delta'$ with $\delta'>0$, we have that on the exponent of the numerator, $-(\alpha-1)\gamma+1=-\frac{d}{2}+\frac{d}{2\alpha}+1-\delta'<1-\frac{d}{2}+\frac{d}{4}-\delta'=1-\frac{d}{2}-\delta'$. It suffices to choose $\delta<\delta'$.

For the second term in \eqref{eq:poly}, we rewrite it as
\begin{equation}\label{eq:two_terms}
\sum\limits_{x\in\bbZ^d}\epsilon_x\bE\Big[\big(e^{(\lambda_N-h_N)|\cR_N|}-1)\ind_{\{x\in\cR_N\}}\Big]+\sum\limits_{x\in\bbZ^d}\epsilon_x\bP(x\in\cR_N).
\end{equation}
When $\alpha>2$, $|\lambda_N|\sim\frac12\beta_N^2$. If $\zeta\geq2\gamma$, then $h_N=O(\lambda_N)$, and the case is identical to equation (7.16) in \cite{BTW21}. If $\zeta<2\gamma$, then $\lambda_N=o(h_N)$, and for the first term of \eqref{eq:two_terms}, we have that by $\zeta>1$, its second moment is bounded above by
\begin{equation*}
\frac{\beta_N^2 h_N^2 N^2\bE[|\cR_N\cap\cR'_N|]}{a_N^2\beta_N^2}\leq \frac{N^{2-2\zeta}}{a_N^2}\times\begin{cases}
\sqrt{N}, &\quad\text{if}~d=3,\\
\log N, &\quad\text{if}~d=4,\\
c_d, &\quad\text{if}~d\geq5,
\end{cases}
\quad\overset{N\to\infty}{\longrightarrow}0.
\end{equation*}
The convergence of the second term in \eqref{eq:two_terms} to $\cX$ was established in \cite[Section~7.2]{BTW21}.

The treatment of \eqref{eq:old1} in Theorem~\ref{thm:R1c} is analogous to that of \eqref{eq:a>2} above, and we only need to adjust the centering term. Noting that for $\alpha>\frac{d}{d-2}>1$, we have that
\begin{equation*}
\frac{\bE\Big[\Big|e^{(\lambda_N-h_N)|\cR_N|}-e^{-h_N|\cR_N|}\Big|\Big]}{\beta_N}\leq\frac{|\lambda_N|N}{\beta_N}\leq N^{-(\alpha-1)\gamma+1+\delta}\leq N^{-\frac{d}{2}+\frac{d}{2\alpha}+1-\delta'}\leq N^{-\delta'}\overset{N\to\infty}{\longrightarrow}0,
\end{equation*}
where we used that $\gamma > \frac{d}{2\alpha}$. The treatment of all other terms in \eqref{eq:poly} is identical to that for \eqref{eq:a>2}.

\vspace{0.2cm}
The end-to-end fluctuations in the region $R_1$ can be shown by the same method in \cite[Section 7.3]{BTW21}.

\subsection{Regions $R_2$ and $R_3$}

By the trivial bound $-|\cR_N|\geq-N$, we have that
\begin{equation*}
\bZ_N^{h_N=0}e^{-\hat{h}N^{1-\zeta}}\leq\bZ_{N}\leq\bZ_N^{h_N=0}.
\end{equation*}
Then in the region $R_2$, by taking logarithm of $e^{\hat{h}N^{1-\zeta}}$ and dividing it by $N^{2\xi-1}$, we have that
\begin{equation*}
N^{1-\zeta-(2\xi-1)}=N^{2-\zeta-\frac{2\ga(1-\gamma)}{2\ga-d}}=N^{\frac{2\ga}{2\ga-d}(\gamma-\frac{2\ga-d}{2\ga}\zeta-\frac{d-\ga}{\ga})}\overset{N\to\infty}{\longrightarrow}0.
\end{equation*}
Similarly, in the region $R_3$, we have that
\begin{equation*}
\frac{N^{1-\zeta}}{\beta_N N^{\frac{d}{\ga}}}=N^{1-\zeta+\gamma-\frac{d}{\ga}}=N^{\gamma-\zeta-\frac{d-\ga}{\ga}}\overset{N\to\infty}{\longrightarrow}0.
\end{equation*}

Hence, in the regions $R_2$ and $R_3$, the range penalty is negligible, and the approaches in \cite{BTW21} can be adapted here without changes. The end-to-end fluctuations in the regions $R_2$ and $R_3$ can also be shown by the same method in \cite[Section 5.4, Section 6.2]{BTW21}.

\bibliographystyle{plain}
\bibliography{references}

\end{document}